\documentclass[12 pt]{amsart}

\usepackage{amsmath,amsthm,amssymb,shuffle,stmaryrd,mathtools,enumitem}

\usepackage{ytableau}
\newcommand{\blank}{\phantom{2}}
\usepackage{enumitem}
\usepackage[hmargin=1 in, vmargin = 1 in]{geometry}
\usepackage{hyperref}
\usepackage[capitalize]{cleveref}
\usepackage{mathdots}

\usepackage{tikz-cd}
\tikzcdset{every label/.append style = {font = \footnotesize}}

\newcommand{\ct}{\mathsf{ct}}

\definecolor{darkblue}{rgb}{0.0,0,0.7}

\newcommand{\newword}[1]{\textcolor{darkblue}{\textbf{\emph{#1}}}}

\newcommand{\rect}{\mathrm{rect}}
\newcommand{\Z}{\mathbb{Z}}

\newcommand{\OG}{\mathrm{OG}}

\newcommand{\sPlactic}{\mathbf{S}}

\newcommand{\hookword}{\mathsf{hook}}

\newcommand{\npowerser}[1]{ \!
  \left\langle\mkern-3mu\left\langle#1\right\rangle\mkern-3mu\right\rangle \!}

\newcommand{\sPlacticSchur}{\mathcal{P}}
\newcommand{\sFreeSchur}{\widehat{\mathcal{P}}}

\newcommand{\cN}{\mathcal{N}}

\newcommand{\shSSYT}{\mathrm{ShSSYT}}
\newcommand{\shSSYTprime}{\pr{\shSSYT}}

\newcommand{\diag}{\mathrm{diag}}

\newcommand{\mix}{\mathrm{mix}}

\newcommand{\bbb}{\mathsf{b}}
\newcommand{\ccc}{\mathsf{c}}

\renewcommand{\leq}{\leqslant}
\renewcommand{\geq}{\geqslant}
\newcommand{\pr}[1]{\underline{#1}}
\newcommand{\unpr}[1]{{\overline{#1}}}
\newcommand{\qpr}[1]{\mathbf{#1}}

\newtheorem{thm}{Theorem}[section]
\newtheorem{theorem}[thm]{Theorem}

\newtheorem{lemma}[thm]{Lemma}
\newtheorem{corollary}[thm]{Corollary}

\newtheorem{problem}[thm]{Problem}

\DeclareMathOperator{\rectify}{rect}
\DeclareMathOperator{\stand}{std}

\theoremstyle{remark}
\newtheorem{definition}[thm]{Definition}
\newtheorem{remark}[thm]{Remark}
\newtheorem{examplex}[thm]{Example}
\newenvironment{example}
  {\pushQED{\qed}\examplex}
  {\popQED\endexamplex}

\numberwithin{equation}{section}

\crefname{thm}{Theorem}{Theorems}
\crefname{theorem}{Theorem}{Theorems}

\crefname{lem}{Lemma}{Lemmas}
\crefname{lemma}{Lemma}{Lemmas}

\crefname{corollary}{Corollary}{Corollaries}
\crefname{proposition}{Proposition}{Propositions}
\crefname{problem}{Problem}{Problems}
\crefname{conjecture}{Conjecture}{Conjectures}
\crefname{definition}{Definition}{Definitions}
\crefname{remark}{Remark}{Remarks}

\crefname{example}{Example}{Examples}
\crefname{examplex}{Example}{Examples}

\crefname{equation}{Equation}{Equations}
\crefname{figure}{Figure}{Figures}
\crefname{table}{Table}{Tables}

      \AtBeginDocument{%
   \def\MR#1{}
}

\begin{document}

\title{Mixed jeu de taquin and a problem of Soojin Cho}

\author{Santiago Estupi\~n\'an-Salamanca}
\address[SES]{Department of Combinatorics \& Optimization, University of Waterloo, Waterloo ON N2L 1P3, Canada}
\email{sestupinan@uwaterloo.ca}
\author{Oliver Pechenik}
\address[OP]{Department of Combinatorics \& Optimization, University of Waterloo, Waterloo ON N2L 1P3, Canada}
\email{opecheni@uwaterloo.ca}

\date{\today}

\keywords{words words words}

\begin{abstract}
Serrano (2010) introduced the \emph{shifted plactic monoid}, governing Haiman's (1989) \emph{mixed insertion} algorithm, as a type B analogue of the classical plactic monoid that connects jeu de taquin of Young tableaux with the Robinson--Schensted--Knuth insertion algorithm. Serrano proposed a corresponding definition of skew shifted plactic Schur functions.  Cho (2013) disproved Serrano's conjecture regarding this definition, by showing that the functions do not live in the desired ring and hence cannot provide an algebraic interpretation of tableau rectification or of the corresponding structure coefficients. Cho asked for a new definition with particular properties. We introduce such a definition and prove that it behaves as desired. 
We also introduce a new jeu de taquin theory that computes mixed insertion. 
\end{abstract}

\maketitle

\section{Introduction}

The \emph{Schur functions} $s_\lambda$ are a basis of the ring of symmetric functions and their structure coefficients $c_{\lambda,\mu}^\nu$, appearing in the expansions
\[
s_\lambda \cdot s_\mu  = \sum_\nu c_{\lambda,\mu}^\nu s_\nu,
\]
are the \emph{Littlewood--Richardson coefficients}. These objects play important roles in the linear representation theory of symmetric groups (where $s_\lambda$ represents a \emph{Specht module}), in the linear representation theory of general linear groups (where $s_\lambda$ represents a \emph{Schur module}), and in the Schubert calculus of Grassmannians (where $s_\lambda$ represents a \emph{Schubert cycle}). The Littlewood--Richardson coefficients may be studied combinatorially through the interlocking machinery of \emph{jeu de taquin} on \emph{Young tableaux}, the \emph{Robinson--Schensted--Knuth} (RSK) insertion algorithm, and the \emph{plactic monoid}. For exposition of these now-standard ideas, see, for example, the textbooks \cite{Fulton:YT,Manivel}.

The ideas discussed so far may all be considered ``type A.'' In other classical types, the corresponding theories are more complicated and the relations less well developed. The analogues of the usual Schur functions are the \emph{Schur $P$-functions} and \emph{Schur $Q$-functions}, which play similar roles for the projective representation theory of symmetric groups as well as for the Schubert calculus of Lagrangian and maximal orthogonal Grassmannians.

In this setting, \emph{Sagan--Worley} \cite{Sagan,Worley} insertion is an analogue of RSK that is closely connected to a \emph{Sagan--Worley} jeu de taquin. This Sagan--Worley theory does not appear to have a corresponding analogue of the plactic monoid. On the other hand, \emph{mixed} insertion \cite{Haiman} is a different analogue of RSK insertion with a corresponding \emph{shifted plactic monoid} \cite{Serrano}. This latter theory has not been known to have a corresponding jeu de taquin. We provide\footnote{The results of this paper were first announced in the extended abstract \cite{EstupinanSalamanca.Pechenik:FPSAC}.} such a jeu de taquin in \cref{sec:mixedJeudeTaquin}. Our new jeu de taquin computes mixed insertions, despite lacking some of the properties of ordinary jeu de taquin, such as confluence. There is precedent for useful theories of jeu de taquin without confluence, such as the $K$-theoretic jeu de taquin theories of \cite{Thomas.Yong:K, Pechenik.Yong} and we hope that our mixed jeu de taquin will lead to similar such developments.

In type A, the combinatorics of Schur functions can realized through \emph{plactic skew Schur functions}, which plactically encode both jeu de taquin rectification and RSK insertion. This realization yields an algebraic perspective on these combinatorial theories, as well as on the Littlewood--Richardson coefficients. Serrano \cite[p.~379]{Serrano} introduced a notion of \emph{plactic skew Schur $P$-functions}, which he conjectured \cite[Conjecture~2.12]{Serrano} (see also, \cite[Conjecture~7.8]{Cho}) would play an analogous role in other classical types. If true, this would have given an algebraic substitute for the missing jeu de taquin associated to mixed insertion and to the shifted plactic monoid. Unfortunately, Serrano's conjecture was not correct. Precisely, Cho \cite[\textsection 7]{Cho} gave explicit examples to show that Serrano's plactic skew Schur $P$-functions do not lie in the desired ring and hence they cannot serve the desired role in substituting for jeu de taquin or in shedding algebraic light on structure coefficients. Having disproved Serrano's conjecture, Cho asks in \cite[Open Problem~7.12(1)]{Cho} for a new definition that would accomplish Serrano's goals. More precisely, she sets the following task:
\begin{problem}[\cite{Cho}]\label{problem:Cho}
    For $\eta  \leq  \theta$, give a new definition of the plactic skew Schur $P$-function $\sPlacticSchur_{\theta/\eta}$
 so that it belongs to the ring generated by plactic Schur $P$-functions and it describes
the multiplicities of the ordinary $P_\lambda$ in the expansion of $P_{\theta/\eta}$ in a nice way.
\end{problem}
We solve this problem. Interestingly, the key tool of our solution is the Sagan--Worley jeu de taquin theory. By combining this jeu de taquin with shifted plactic theory, we obtain plactic skew Schur $P$-functions that lie in the correct ring and that govern the expansion multiplicities of ordinary Schur $P$-functions, as desired.

\medskip
{\bf This paper is structured as follows.} In \cref{sec:background}, we recall background and give necessary new definitions. Our treatment differs notationally from standard approaches in ways that we believe are clarifying. In \cref{sec:mixedJeudeTaquin}, we introduce mixed jeu de taquin and prove various properties of it. In particular, we show in \cref{thm:mixedRectification} that it computes Haiman's mixed insertions. \cref{sec:Cho} gives a new definition of plactic skew Schur $P$-functions $\sPlacticSchur_{\theta/\eta}$ and shows that these give a solution to \cref{problem:Cho}.

\section{Background and new definitions}\label{sec:background}

A \newword{strict partition} is a decreasing sequence $\theta = (\theta_1 > \theta_2 > \dots > \theta_k > 0)$ of positive integers. We write $|\theta| = \sum_i \theta_i$ and $\ell(\theta) = k$. Associated to a strict partition is a \newword{shifted Young diagram} formed by placing $\theta_i$ boxes in each row $i$ in columns $i, \dots, i + \theta_i - 1$. We use the English convention on Young diagrams, so that row $1$ is the top row and column $1$ is the leftmost column. For example, 
\[
\ydiagram{5, 1 + 4, 2 + 1}
\]
is the shifted Young diagram of the strict partition $(5,4,1)$. We routinely conflate a strict partition with its shifted Young diagram. In this paper, all Young diagrams are shifted.

If $\eta, \theta$ are strict partitions with $\eta_i \leq \theta_i$ for all $i$ (treating $\eta_j = 0$ for $j > \ell(\eta)$), then we have a \newword{skew Young diagram} $\theta / \eta$, the set of boxes of $\theta$ that are not boxes of $\eta$. We identify $\lambda$ with the skew shape $\lambda / \emptyset$. The \newword{diagonal} of a skew Young diagram $\theta / \eta$ consists of the boxes of $\theta / \eta$ in positions $(i,i)$ for some $i$, so for example the diagonal boxes of $(5,4,1)$ are those shaded in the diagram 
\[
\ydiagram{5, 1 + 4, 2 + 1}  * [*(yellow)]{1, 1+1, 2+1}.
\]

We use the alphabets $\cN$ of positive integers and $\pr{\cN} \cup \unpr{\cN} = \{\pr{1}, \pr{2}, \dots \} \cup \{\unpr{1}, \unpr{2}, \dots \}$ of positive integers decorated with under- and overlines.  We consider this double alphabet with the total ordering 
\begin{equation}\label{eq:alphabet_order}
    \pr{1} < \unpr{1} < \pr{2} < \unpr{2} < \pr{3} < \cdots.
\end{equation}
We call underlined entries \newword{low} and overlined entries \newword{high}. We refer to replacing an entry $\pr{i}$ with $\unpr{\imath}$ as \newword{raising} that entry. We will usually write ${\mathbf i}$ for an unspecified element of $\{\pr{i}, \unpr{\imath} \}$.

    \begin{remark}
    Our notation allows us to easily make a three-way distinction between $\underline{i}$, $\overline{\imath}$, and the underlying integer $i$.     
    
    A more traditional notation is to write $i'$ for $\underline{i}$ and $i$ for $\overline{\imath}$. In this traditional notation, it is awkward to refer unambiguously to the underlying integer, as we frequently need to do.
    \end{remark}

\begin{definition}\label{def:semistandard}
A \newword{semistandard tableau} of shape $\theta/\eta$ is a filling $T$ of the boxes of $\theta/\eta$ with values from $\pr{\cN} \cup \unpr{\cN}$ such that 
\begin{itemize}
    \item rows are weakly increasing from left to right,
    \item columns are weakly increasing from top to bottom,
    \item no low entry appears in any diagonal box,
    \item there is at most one $\unpr{\imath}$ in each column, and
    \item there is at most one $\pr{j}$ in each row.
\end{itemize}
\end{definition}

\begin{example}
    Let $\mu=(1)$ and $\lambda=(7,4,3)$. Then the tableau
    \[
    \begin{ytableau}
        *(lightgray) \empty & \unpr 1 & \unpr 1 & \pr{2} & \pr{4} & \unpr{6} & \pr{7} \\
        \none& \unpr 2 & \unpr 2 & \pr{3} & \pr{4} \\
        \none& \none& \unpr 3 & \pr{4} &\unpr 4
    \end{ytableau}
    \]
    is semistandard and of shape $\lambda/\mu.$
\end{example}

We write $\shSSYT(\theta/\eta)$ for the set of all semistandard tableaux of shape $\theta/\eta$.

The \newword{Schur $P$-function} $P_{\theta / \eta}$ is the symmetric function 
    $$P_{\theta / \eta} \coloneqq \sum_{T\in \shSSYT({\theta / \eta})}{\bf x}^{c(T)}=\sum_{T\in \shSSYT({\theta / \eta})} \prod_{i \in \Z_{>0} }x_i^{c_i(T)} \in \Z \llbracket x_1, x_2, \dots, \rrbracket ,$$
    where $c_i$ counts the total number of entries $\underline{i}$ and $\overline{\imath}$ in $T$.
    We will also need the \newword{Schur $Q$-function} 
    $Q_{\theta / \eta} \coloneqq 2^{\ell(\theta) - \ell(\eta)} P_{\theta / \eta}.$ Note that we can also think of $Q_{\theta / \eta}$ as the generating function for a generalization of shifted semistandard tableaux where we allow low entries in any box; we will consider this perspective further in  \cref{sec:Cho}.

  A \newword{hook subword} of a word $w$ in alphabet $\cN$ is a subword $d\cdot i$ such that $d$ is strictly decreasing and $i$ is weakly increasing. If $w$ is a hook subword of itself, we say $w$ is a \newword{hook word}. For example, $4211688$ is a hook word.
    Let $\lambda$ be a strict partition. Denote by $\hookword(\lambda)$ the set of  words $w=w_1\ldots w_k$, where each $w_i$ is a hook word of length $\lambda_{\ell(\lambda)-i+1}$ and such that, for all $i>1$, $w_i$ is a longest hook subword of $w_{i-1}w_i$. 
    Then the \newword{shifted free Schur function} of shape $\lambda$ is the formal power series in noncommuting variables
\[\sFreeSchur_\lambda \coloneqq \sum_{ w\: : \:  w\in \hookword(\lambda)   }   w \in \Z\npowerser{x_1, x_2, \dots}.\]
(Shifted free Schur functions were first introduced by Serrano \cite{Serrano} using a definition based on \emph{mixed reading words} for certain tableaux; our definition, first stated in \cite{EstupinanSalamanca.Pechenik}, is different from his, but is equivalent.) Note that shifted free Schur functions are not being defined in the skew setting; this fact is the crux of Cho's open problem.

The \newword{shifted plactic monoid}  \cite{Serrano} is the quotient  $\sPlactic$ of the free monoid on $\cN$ by the eight families of relations
    \begin{align*}
        &abdc\sim adbc \quad \text{for all }  a\leq b \leq c<d;  \\
    &acdb\sim acbd \quad \text{for all }  a\leq b < c \leq d;  \\ 
        &dacb\sim adcb \quad \text{for all }  a\leq b < c <d; \\ 
        &badc\sim bdac \quad \text{for all }  a< b \leq c<d;  \\ 
        &cbda\sim cdba \quad \text{for all }  a< b <c \leq d;  \\ 
        &dbca\sim bdca \quad \text{for all }  a< b \leq c<d;  \\ 
        &bcda\sim bcad \quad \text{for all }  a< b \leq c\leq d; \\ 
        &cadb\sim cdab \quad \text{for all }  a\leq  b < c\leq d. 
    \end{align*}
The \newword{shifted plactic Schur function} $\sPlacticSchur_\lambda$ is the image of the shifted free Schur function $\sFreeSchur_\lambda$ under the projection map identifying monomials whose sequences of variable subscripts are equivalent in the shifted plactic monoid. Then $\sPlacticSchur_\lambda \in \Z \llbracket \sPlactic \rrbracket$, the quotient of $\Z\npowerser{x_1, x_2, \dots}$ by the shifted plactic relations on variables. By further projecting to the ring in commuting variables, we recover the ordinary Schur $P$-functions.

Let $\shSSYTprime(\nu/\lambda)$ be the set of shifted tableaux of shape $\lambda$, with diagonal entries possibly low, such that raising the diagonal entries gives an element of $\shSSYT(\nu/\lambda)$. We also call tableaux in $\shSSYTprime(\nu/\lambda)$ the \newword{$Q$-tableaux} of shape $\nu/\lambda$. 

\begin{example}\label{ex:Qtableau}
    Let $\mu=(2)$ and $\lambda=(7,4,2)$. Then the tableau
    \[ U = 
    \begin{ytableau}
       *(lightgray) \empty & *(lightgray) \empty & \unpr 1 & \pr{2} & \pr{5} & \unpr{6} & \pr{7} \\
        \none& \pr{2} & \unpr 2 & \pr{3} & \pr{5} \\
        \none& \none& \unpr 3 & \pr{4}
    \end{ytableau}
    \]
    is a $Q$-tableau of shape $\lambda/\mu$ that is not semistandard because of the $\pr 2$ in the first box of the second row.
\end{example}

Given a $Q$-tableau $T\in \shSSYTprime(\nu/\lambda)$, there is a natural way to represent it as a standard tableau. To wit, list the entries of $T$ so that they form a weakly increasing list
\begin{equation}\label{eq:tableauAlphabet}
    \qpr{a_1}\leq \qpr{a_2} \leq \cdots \leq \qpr{a_n}
\end{equation}
with $\pr{a_i}$ listed before $\pr{a_j}$ if $\pr{a_i}=\pr{a_j}$ and $\pr{a_i}$ is north of $\pr{a_j}$; and $\unpr{a_i}$ listed prior to $\unpr{a_j}$ when $\unpr{a_i}=\unpr{a_j}$ and $\unpr{a_i}$ is west of $\unpr{a_j}$. We can regard this list as an ordered alphabet. Then, there is a unique order preserving bijection $\phi$ between the ordered list of \eqref{eq:tableauAlphabet} and the set $[\unpr n] \coloneqq \{\unpr 1, \dots, \unpr n\}$. We define the \newword{standardization} $\stand(T)$ of $T$ as the tableau resulting from applying $\phi$ to the entries. We say that $T$ is a \newword{standard tableau} if $T = \stand(T)$. 

\begin{example}
    The standardization of the semistandard tableau of \cref{ex:Qtableau} is 
       \[ \stand(U) = 
    \begin{ytableau}
       *(lightgray) \empty & *(lightgray) \empty & \unpr 1 & \unpr 2 & \unpr 8 & \unpr{10} & \unpr{11} \\
        \none& \unpr 3 & \unpr 4 & \unpr 5 & \unpr 9 \\
        \none& \none& \unpr 6 & \unpr 7
    \end{ytableau} \qedhere
    \]
\end{example}

\newword{Mixed insertion} \cite{Haiman} is a process that inserts a letter $\unpr{x} \in \unpr{\cN}$ into a semistandard tableau $T$ as follows. Compare $\unpr{x}$ with the entries of the first row of $T$. If $\unpr{x}$ is weakly greatest, place $\unpr{x}$ at the end of the row and end the process. Otherwise, as in Robinson--Schensted insertion, let $\qpr{y}$ be the least entry of the row that is strictly greater than $\unpr{x}$. Then $\unpr{x}$ replaces $\qpr{y}$ in this row. We next insert $\qpr{y}$. If $\qpr{y} = \unpr{y}$ and $\qpr{y}$ was not bumped from the main diagonal, insert $\unpr{y}$ into the next row in the same fashion. Otherwise either $\qpr{y} = \pr{y}$ or $\qpr{y}$ was bumped from a diagonal box; in these cases, insert $\pr{y}$ into the next column.

\begin{example}
    Let 
    \[
T = 
\begin{ytableau}
  \unpr{1} & \unpr{2} & \unpr{5}\\
  \none &  \unpr{4} & \pr{6}  
\end{ytableau}.
    \]
    We mixed insert $\unpr{1}$ into $T$. First, we obtain 
    \[
\begin{ytableau}
  \unpr{1} & \unpr{1} & \unpr{5}\\
  \none &  \unpr{4} & \pr{6}  
\end{ytableau}
    \]
    and proceed to insert $\unpr{2}$ into the second row.
    This yields
        \[
\begin{ytableau}
  \unpr{1} & \unpr{1} & \unpr{5}\\
  \none &  \unpr{2} & \pr{6}  
\end{ytableau}.
    \]
    Since $\unpr{4}$ was bumped from a diagonal box, we now insert $\pr{4}$ into the third column, obtaining
        \[
\begin{ytableau}
  \unpr{1} & \unpr{1} & \pr{4}\\
  \none &  \unpr{2} & \pr{6}  
\end{ytableau}.
    \]
    Next we insert $\unpr{5}$ into the second row, giving
        \[
\begin{ytableau}
  \unpr{1} & \unpr{1} & \pr{4}\\
  \none &  \unpr{2} & \unpr{5}  
\end{ytableau}
    \]
    and bumping out the $\pr{6}$.
    Finally, the $\pr{6}$ inserts trivially into the next column, which is empty, yielding 
            \[
\begin{ytableau}
  \unpr{1} & \unpr{1} & \pr{4} & \pr{6}\\
  \none &  \unpr{2} & \unpr{5}  
\end{ytableau},
    \]
    the mixed insertion of $\unpr{1}$ into $T$.
\end{example}

The key feature of the shifted plactic monoid is that two words are shifted plactic equivalent if and only if they have the same mixed insertion tableaux. Hence, we may identify shifted plactic classes with semistandard tableaux. We write $[T]_\sPlactic$ for the shifted plactic class of the tableau $T$.

Introduce an extra letter $\bullet$ that is neither over- nor underlined.
A \newword{tableau with holes} is a filling of skew shape in such an extended alphabet such that the entries other than $\bullet$ satisfy the semistandardness conditions of \cref{def:semistandard}. For clarity, we note that as $\bullet$ is neither high nor low, it may appear multiple times in both rows and columns and may appear in diagonal boxes. 
We sometimes refer to boxes outside of the skew shape $\theta/\eta$ as \newword{gaps}.

\section{Mixed jeu de taquin}\label{sec:mixedJeudeTaquin}
Ordinary jeu de taquin is a procedure to turn (non-shifted) skew tableaux into (non-shifted) straight-shape tableaux, in such a way that Robinson--Schensted insertion is respected. To be specific, for any word $w=w_1w_2\cdots w_n$, we have that
\begin{equation}\label{eq:rect}\mathrm{rect}\left(\begin{ytableau}
    \cdot &  \cdot  & \cdots  & w_n   \\
    \cdot &  \cdot  & \iddots & \none \\
    \cdot &  w_2    & \none   & \none \\
    w_1   &  \none  & \none   & \none
\end{ytableau}\right) = P(w) \end{equation}
where $\rect(\bullet)$ denotes the result of jeu de taquin after any choice of slides, and $P(\bullet)$ stands for Robinson--Schensted insertion. 

Unlike the type $A$ case, type $B$ insertion algorithms come in two flavours: Sagan--Worley's insertion and Haiman's mixed insertion. Associated to the first insertion there is a jeu de taquin satisfying a relation analogous to \eqref{eq:rect} and with other desirable properties ordered towards understanding of the cohomology ring $H^\star(\OG(n,2n+1))$. However, a rectification algorithm in the same vein, capable of emulating mixed insertion in place of Sagan--Worley's insertion, has not been developed; nor its potential connection to the structure constants of the Lagrangian Grassmannian explored. In this section, we study one such algorithm that in particular satisfies \eqref{eq:rect} {\it mutatis mutandis}.

\begin{definition}
    Let $T$ be a shifted semistandard tableau of shape $\nu / \lambda$ (recalling that such tableaux have no low entries in diagonal cells). Place a bullet $\bullet$ in each box of the bottom row of $\lambda$. We now swap the bullets past the entries of $T$, deleting them as soon as they have no entry of the tableau to their southeast.

    We say $\qpr{y} \in \bbb$ is \newword{available} if it has a $\bullet$ in $\bbb^\uparrow$ or $\bbb^\leftarrow$.
    We order available entries using the order \eqref{eq:alphabet_order}, breaking ties so that the northmost $\pr{y}$ is least and the westmost $\unpr{y}$ is least.
    If an available entry exists, let $\qpr{y} \in \bbb$ be the least available entry. 

    Below we list $6$ collections of potential \newword{mixed slides}. Apply the unique possible mixed slide from the first collection where a slide is possible:

\begin{equation}\label{eq:diagonal_slides}
\begin{ytableau}
    \bullet & \bullet \\
    \bullet & \qpr{y}
\end{ytableau}
\quad \rightarrow \quad  
\begin{ytableau}
    \qpr{y} & \bullet \\
    \bullet & \bullet
\end{ytableau} \text{,$\hspace{5mm}$or}\hspace{7mm} 
\begin{ytableau}
    \bullet & \bullet \\
    \none   &  \qpr{y}
\end{ytableau} \rightarrow \begin{ytableau}
    \qpr{y} & \bullet \\
    \none   &  \bullet
\end{ytableau};
\end{equation}

\begin{equation}\label{eq:singular_slides}
\begin{ytableau}
    \qpr{x} & \bullet \\
    \bullet & \pr{y}
\end{ytableau} \quad \rightarrow \quad
    \begin{ytableau}
        \qpr{x} & \pr{y} \\
        \bullet & \bullet
    \end{ytableau} \; (\qpr{x} \neq \pr{y}), \quad  \text{or} \quad \;\;
    \begin{ytableau}
    \qpr{x} & \bullet \\
    \bullet & \unpr{y}
\end{ytableau} \quad \rightarrow \quad
    \begin{ytableau}
        \qpr{x} & \bullet \\
        \unpr{y} & \bullet
    \end{ytableau} \; (\qpr{x} \neq \unpr{y});
\end{equation}

\begin{equation}\label{eq:not_singular_slides}
\begin{ytableau}
    \qpr{w} & \qpr{x} & \bullet\\
    \none & \bullet  & \pr{y}
\end{ytableau} \;\;\; \rightarrow \;\;\;
    \begin{ytableau}
        \qpr{w} & \qpr{x} & \bullet\\
        \none & \unpr{y}  & \bullet
    \end{ytableau}, \quad
\begin{ytableau}
    \qpr{x} & \bullet \\
    \bullet & \pr{y}
\end{ytableau} \;\;\; \rightarrow \;\;\;
    \begin{ytableau}
        \qpr{x} & \bullet \\
        \pr{y} & \bullet
    \end{ytableau}, \;\; \text{or} \quad
    \begin{ytableau}
    \qpr{x} & \bullet \\
    \bullet & \unpr{y}
\end{ytableau} \;\;\; \rightarrow \;\;\;
    \begin{ytableau}
        \qpr{x} & \unpr{y} \\
         \bullet & \bullet
    \end{ytableau}; 
\end{equation}

\begin{equation}\label{eq:primedSW}
    \begin{ytableau}
        \bullet & \pr{y} \\
        \none & \unpr{y}
    \end{ytableau}\quad \rightarrow \quad
    \begin{ytableau}
    \unpr{y} & \unpr{y} \\
    \none & \bullet
\end{ytableau}; 
\end{equation}

\begin{align}\label{eq:SW}
    &\begin{ytableau}
        \bullet & \qpr{y} \\
        \none & \blank
    \end{ytableau}  \quad \rightarrow \quad
     \begin{ytableau}
        \unpr{y} & \bullet \\
        \none & \blank
    \end{ytableau}, \quad
    \begin{ytableau}
    \qpr{x} & \bullet \\
    \none & \unpr{y}
\end{ytableau} \quad \rightarrow \quad
\begin{ytableau}
    \qpr{x} & \pr{y} \\
    \none & \bullet
\end{ytableau}; \\
 &\begin{ytableau}
        \bullet & \qpr{y} 
    \end{ytableau}  \quad \rightarrow \quad
     \begin{ytableau}
        \qpr{y} & \bullet 
    \end{ytableau}, \quad
 \begin{ytableau}
        \bullet \\ \qpr{y} 
    \end{ytableau}  \quad \rightarrow \quad
     \begin{ytableau}
        \qpr{y} \\ \bullet 
    \end{ytableau}.
\end{align}

Repeat this operation on the new least available entry (possibly the same as the last least available entry), unless no entries are available. The result is a tableau of shape $\nu'/ \lambda'$. Place a bullet in each box of the bottom row of $\lambda'$ and repeat until obtaining a tableau of straight shape. The result is the \newword{mixed rectification} $\mathrm{rect}_\mix(T)$ of $T$.

The moves of \eqref{eq:diagonal_slides} are called \newword{diagonal slides}. Essentially, non-diagonal slides will simulate row- or column-displacements during mixed insertion of a word, whereas entries that only partake of diagonal slides will correspond to entries unaffected by such displacements. 
\end{definition}

Note that some of these slides match the local rules of the Sagan--Worley jeu de taquin \cite{Sagan, Worley}, but in general they behave quite differently.
\begin{example}
    Let 
    \[
    T = \begin{ytableau}
    \blank & \blank & \blank & \blank & \unpr{1}\\
    \none & \blank & \unpr{2} & \unpr{4} \\
    \none & \none & \unpr{3} 
    \end{ytableau}. 
    \]
    Then we compute the mixed rectification by the following steps:
    \[\def\arraystretch{2.6}
    \begin{array}{ccccccc}
        \begin{ytableau}
    \blank & \blank & \blank & \blank & \unpr{1} \\
    \none & \bullet & \unpr{2} & \unpr{4} \\
    \none & \none & \unpr{3} 
    \end{ytableau} & \rightarrow &  \begin{ytableau}
    \blank & \blank & \blank & \blank & \unpr{1} \\
    \none & \unpr{2} & \bullet & \unpr{4} \\
    \none & \none & \unpr{3} 
    \end{ytableau}  &\rightarrow &
    \begin{ytableau}
    \blank & \blank & \blank & \blank & \unpr{1}\\
    \none  & \unpr{2} & \pr{3} & \unpr{4}\\
    \none & \none & \bullet 
    \end{ytableau} & \rightarrow &
    \begin{ytableau}
    \bullet & \bullet & \bullet & \bullet & \unpr 1 \\
    \none &  \unpr{2} & \pr{3} & \unpr{4} 
    \end{ytableau}  \\[20pt]
    \begin{ytableau}
    \unpr{1} & \bullet & \bullet & \bullet \\
    \none &  \unpr{2} & \pr{3} & \unpr{4} 
    \end{ytableau} & \rightarrow & 
    \begin{ytableau}
    \unpr{1} & \pr{2} & \bullet & \bullet \\
    \none &  \bullet & \pr{3} & \unpr{4}  
    \end{ytableau} & \rightarrow &
    \begin{ytableau}
    \unpr{1} & \pr{2} & \pr{3} & \bullet  \\
    \none &  \bullet  & \bullet & \unpr{4}  
    \end{ytableau} & \rightarrow 
    & \begin{ytableau}
    \unpr{1} & \pr{2} & \pr{3} \\
    \none & \unpr{4}
    \end{ytableau}.
    \end{array}
    \]
\end{example}

The following lemma gives a useful structural property of mixed rectification. 

\begin{lemma}\label{prop:noPrimesDiagonals}
    Let $T \in \shSSYT(\nu/\mu)$. Then, throughout the mixed rectification of $T$, no low entry occupies a diagonal cell.
\end{lemma}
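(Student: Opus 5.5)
The plan is induction on the number of individual mixed slides performed. Initially $T \in \shSSYT(\nu/\mu)$, so by \cref{def:semistandard} no low entry sits in a diagonal cell; placing bullets in the bottom row of the current inner shape does not move any entries, so it cannot create a violation. It therefore suffices to show that each single mixed slide, applied to a tableau with holes in which no low entry is diagonal, yields a tableau with the same property. Since a slide only changes the entries in the (at most three) boxes drawn in its picture, we only need to check the boxes of each local move that can be diagonal and receive an entry.

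The next step is to locate the diagonal boxes in each configuration. In a shifted shape, a box $(i,j)$ can be diagonal only if $i=j$. Hence in any $2\times 2$ block the only possible diagonal box is the upper-left one, or the lower-left one when the lower-left box exists. The lower-left box is diagonal exactly when the block has the form $\begin{smallmatrix} (i-1,i-1) & (i-1,i)\\ (i,i-1) & (i,i)\end{smallmatrix}$, which is impossible because $(i,i-1)$ is not in any shifted shape. Indeed the pictures with an empty box (``\verb|\none|'') in the lower-left are precisely the ones straddling the diagonal. So in a full $2\times2$ block, the only box that could be diagonal is the upper-left one, and only when the lower-left box is absent. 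I would then go through the collections one at a time.

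\begin{itemize}
\item In \eqref{eq:diagonal_slides}, $\qpr{y}$ moves to the upper-left box. In the second picture that box is diagonal. I would argue that $\qpr{y}$ came from a diagonal box (the lower-right $(i+1,i+1)$), so by induction it is high. In the first picture the lower-left box is present, so the upper-left box is not diagonal.
\item In \eqref{eq:singular_slides} and in the middle and last moves of \eqref{eq:not_singular_slides}, the lower-left box is present. Entries that move to the lower-left can be diagonal only if that box is $(i,i)$, and then the lower-right box is $(i,i+1)$. In \eqref{eq:singular_slides} the mover into the lower-left is $\unpr{y}$, which is high, so there is no problem. In the middle move of \eqref{eq:not_singular_slides}, however, $\pr{y}$ moves to the lower-left, and this is the case needing care.
\item The first move of \eqref{eq:not_singular_slides} places $\unpr{y}$, which is high.
\item \eqref{eq:primedSW} and the first move of \eqref{eq:SW} place $\unpr{y}$ in the upper-left diagonal box.
\item The second move of \eqref{eq:SW} places $\pr{y}$ in a non-diagonal box, since its lower-left neighbour position is $(i,i)$, so the box itself is $(i-1,i)$.
\item The last row of moves (plain horizontal and vertical swaps) moves an entry left into a possibly diagonal box only when the box below is absent. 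In that situation the precedence ordering should force \eqref{eq:SW}'s first move instead. A vertical move up into a diagonal box is impossible, because the box below a diagonal box is not in the shape.
\end{itemize}

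The main obstacle is therefore twofold: the middle move of \eqref{eq:not_singular_slides}, and confirming that the plain horizontal slide never applies into a diagonal box. For the former, I would show that when the lower-left box is diagonal, the configuration $\begin{smallmatrix}\qpr{x}&\bullet\\ \bullet&\pr{y}\end{smallmatrix}$ always matches the earlier collection \eqref{eq:singular_slides} (via $\qpr{x}\neq\pr{y}$), or is prevented by semistandardness. If $\qpr{x}=\pr{y}$ with $\qpr{x}$ directly above-left, then $\pr{y}$ at $(i-1,i-1)$ would be a low diagonal entry, contradicting the induction hypothesis. So \eqref{eq:singular_slides} always takes priority there. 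For the plain horizontal slide, I would use the first-applicable-collection rule: the bullet box with nothing below is exactly the pattern of \eqref{eq:SW}, which precedes it and raises the entry. With these priority arguments, every case preserves the invariant, completing the induction.
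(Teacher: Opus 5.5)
\textbf{Verdict: genuine gap.} Your strategy matches the paper's, whose proof is the single observation that every mixed slide moving an entry onto the diagonal makes it high. Most of your case checks are fine. However, the case you correctly flag as delicate, the middle move of \eqref{eq:not_singular_slides}, is resolved by an argument that fails.

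\textbf{Where the coordinates go wrong.} The error starts in your second paragraph. In a full $2\times 2$ block with upper-left box $(r,c)$, the lower-left box $(r+1,c)$ exists only if $c\geq r+1$. So in a full block the upper-left box is never diagonal. The lower-left box is diagonal exactly when the block is $\{(i-1,i),(i-1,i+1),(i,i),(i,i+1)\}$, which certainly occurs. Your bullet list uses this correctly, but your final step places $\qpr{x}$ at $(i-1,i-1)$. In fact, when the lower-left box of $\begin{smallmatrix}\qpr{x}&\bullet\\ \bullet&\pr{y}\end{smallmatrix}$ is $(i,i)$, the entry $\qpr{x}$ sits at $(i-1,i)$. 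That box is not diagonal, so the induction hypothesis says nothing about it, and $\qpr{x}=\pr{y}$ is not excluded.

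\textbf{The configuration actually occurs.} Take $T$ of shape $(4,3)/(2)$ with first row $\pr{2}\,\unpr{2}$ and second row $\unpr{1}\,\pr{2}\,\unpr{3}$; this is semistandard.
\begin{enumerate}
\item Mixed rectification first slides $\unpr{1}$ diagonally to $(1,1)$.
\item The two $\pr{2}$'s are now tied as available entries. The northmost one wins and slides left to $(1,2)$.
\item The lower $\pr{2}$ at $(2,3)$ is now least available, in the configuration $\begin{smallmatrix}\pr{2}&\bullet\\ \bullet&\pr{2}\end{smallmatrix}$ whose lower-left box $(2,2)$ is diagonal.
\end{enumerate}
Collection \eqref{eq:singular_slides} does not apply, since $\qpr{x}=\pr{y}$. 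Read literally, the middle move would put $\pr{2}$ on the diagonal. So your claim that ``\eqref{eq:singular_slides} always takes priority there'' is false.

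\textbf{How to fix it.} Handle this case the same way you handled the plain horizontal slide.
\begin{itemize}
\item When the target of the middle move is the diagonal box $(i,i)$, the box $(i,i-1)$ lies off the shifted plane.
\item The box $(i-1,i-1)$ must hold an entry $\qpr{w}$. If it held a bullet, the upper $\pr{y}$ at $(i-1,i)$ would be available. Being the northmost $\pr{y}$, it would then be less than the lower $\pr{y}$, which therefore would not be least available.
\item Hence the configuration is exactly the $2\times 3$ pattern of the first move of \eqref{eq:not_singular_slides}, which turns $\pr{y}$ into $\unpr{y}$.
\end{itemize}
The definition says a \emph{unique} possible slide is applied within the first applicable collection. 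So the middle move must be read as applying only to non-diagonal targets. This parallels how the first move of \eqref{eq:SW} supersedes the plain horizontal slide on the diagonal. With this correction, your case analysis goes through and agrees with the paper's inspection argument.
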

\begin{proof}
    All mixed slides that move an entry into a diagonal position ensure that it is high. Accordingly, if $T$ has no low entries in diagonal positions to start with, its mixed rectification is also free of low entries in diagonal positions. 
\end{proof}

\begin{definition}
    Let $T$ be a tableau with holes and $\qpr{y}$ an entry of $T$ that is least available. Then $\qpr{y}$ \newword{slides completely} from $\bbb$ to $\mathsf{c}$, if there is a sequence of slides involving $\qpr{y}$ as the least available entry that starts at $\bbb$, ends at $\mathsf{c}$, and so that subsequent slides in the mixed rectification process do not change the position of $\qpr{y}.$
    We also say that $\qpr{y}$ slides completely if there exist $\bbb$ and $\mathsf{c}$ such that it slides completely from $\bbb$ to $\mathsf{c}.$
\end{definition}

\begin{lemma}\label{lem:southeast}
Consider mixed rectification on a tableau $T \in  \shSSYT(\nu/\mu)$.
    If an entry $\qpr{x}$ has not yet moved during mixed rectification, then all entries southeast of $\qpr{x}$ are at least as large as $\qpr{x}$.
\end{lemma}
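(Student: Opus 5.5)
The idea is to reduce the claim to a structural invariant of mixed rectification: at every stage, the non-bullet entries that have not yet moved still occupy their original cells of $T$. Once that is established, the lemma follows from semistandardness of $T$ alone.

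First I will show that the relative order of entries is never reversed among entries that have not moved. Each mixed slide in \eqref{eq:diagonal_slides}--\eqref{eq:SW} only relocates the least available entry $\qpr{y}$, possibly changing its decoration. Every other entry stays in its cell. So an entry $\qpr{x}$ that has not yet moved sits in its original cell $\bbb$ of $T$ with its original value, apart from possible raising or lowering; I will check directly that none of the six families of slides alters any entry other than the one being moved.

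Next, let $\qpr{z}$ be an entry lying southeast of $\qpr{x}$ at the current stage, meaning it is weakly south and weakly east and in a different cell. Whether $\qpr{z}$ has moved or not, it currently occupies a cell $\ccc$ southeast of $\bbb$. Since slides only move entries north or west, the original cell of $\qpr{z}$ in $T$ was also weakly southeast of $\ccc$, and hence southeast of $\bbb$. By semistandardness of $T$ (rows and columns weakly increasing, so entries weakly increase along any southeast path), the original value of $\qpr{z}$ is at least the value of $\qpr{x}$.

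It remains to control how the value of $\qpr{z}$ changes during its moves. Slides only toggle decorations: $\pr{y}\to\unpr{y}$ in \eqref{eq:not_singular_slides} (first move), \eqref{eq:primedSW} and \eqref{eq:SW}, and $\unpr{y}\to\pr{y}$ in the second move of \eqref{eq:SW}. Raising only increases $\qpr{z}$, so the only danger is lowering an entry $\unpr{z}$ to $\pr{z}$. If $z>x$ as underlying integers, then $\pr{z}>\qpr{x}$ and we are done. So the main obstacle is the case $z=x$: one must rule out that $\unpr{x}$ is lowered to $\pr{x}$ and lands southeast of a never-moved $\unpr{x}$. For this I plan to argue as follows. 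The lowering slide moves $\unpr{y}$ from the diagonal cell below $\qpr{x}'$ to the cell directly east of $\qpr{x}'$, where $\qpr{x}'$ is the entry to its west. The new position therefore lies in the row just above the original position. Any unmoved $\unpr{x}$ northwest of that new position would lie in an earlier row and a weakly earlier column than the original position of the moved $\unpr{x}$. That would contradict either the column condition (at most one $\unpr{x}$ per column) or, together with the least-available tie-breaking (westmost $\unpr{y}$ least), the fact that the moved entry was chosen first. I would carry out this last step by a short case check, possibly combined with an induction on the number of slides performed.
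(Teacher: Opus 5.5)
Your reduction is correct, and it is organized differently from the paper: instead of inducting on slides, you compare directly with $T$. Slides move entries only north or west and never change underlying integers. Hence an entry now southeast of an unmoved $\qpr{x}$ began southeast of it in $T$. The only possible failure is an entry that began as $\unpr{x}$, was lowered to $\pr{x}$ by the second slide of \eqref{eq:SW}, and now lies southeast of an unmoved $\unpr{x}$. (A small correction: \eqref{eq:primedSW} alters two entries, not only the least available one. Count both as moved; each ends weakly northwest of where it was with non-decreased value, so nothing breaks.)

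That remaining case is the entire content of the lemma, and your plan for it does not work as stated. If the unmoved $\unpr{x}$ shares a column with the original cell in $T$ of the moved one, the column condition in $T$ finishes the job. You must use $T$ here, not the intermediate tableau, whose semistandardness is only proved later in \cref{lem:well-defined} using this lemma. If it lies in a strictly earlier column, the tie-breaking rule ``westmost $\unpr{y}$ is least'' yields nothing. That rule only ranks entries that are currently available, and nothing in your setup makes the unmoved $\unpr{x}$ available at that moment.

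The missing idea is a property of $T$ alone: the $\unpr{x}$'s of a semistandard shifted skew tableau form a horizontal strip. So if one $\unpr{x}$ is weakly southeast of another, the two lie in the same row. To see this, suppose $\unpr{x}$ occupies $(a,b)$ and $(a',b')$ with $a<a'$ and $b<b'$. Then $(a,b')$ lies in the skew shape and is squeezed to equal $\unpr{x}$, putting two $\unpr{x}$'s in column $b'$; compare \cref{lem:no_southeast}.

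With this fact the danger case closes at once. The moved entry began as an $\unpr{x}$ weakly southeast of the unmoved $\unpr{x}$, hence in the same row. Being lowered forced it through the northward move of \eqref{eq:SW}, and no slide moves anything south. So it now lies strictly above that row and cannot be southeast of the unmoved $\unpr{x}$. No induction or availability argument is needed.

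The paper instead inducts on slides and inspects only the lowering slide, bounding $\pr{y}$ from below by the original content of the landing cell. That bound rests on the same kind of semistandardness input from $T$. Once you add the horizontal-strip fact, your global comparison with $T$ is a clean alternative to the paper's local check.
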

\begin{proof}
    Before mixed rectification, the lemma holds by semistandardness of $T$. Since no slide moves an entry South or East, all slides of mixed rectification that do not change the value of the entries involved preserve this property.
    
    Of the three remaining slides, two (the first slides of \eqref{eq:not_singular_slides} and \eqref{eq:SW}) increase the value of the entry being slid and so also preserve the property.
    
    It remains to consider the second move of \eqref{eq:SW}, by which $\unpr{y}$ moves to the cell $\bbb$ containing a bullet and turns into $\pr{y}$. Since by assumption, $\qpr{x}$ has not moved, we may assume that before any entry was slid, $\bbb \ni \qpr{z}$ with $\qpr{z} \leq \pr{y}$. At that moment, $\qpr{z}$ is southeast of $\qpr{x}$ and $\qpr{x}\leq \qpr{z}$, so that $\qpr{x}\leq \pr{y}.$ But by \cref{prop:noPrimesDiagonals}, $\qpr x = \unpr{x}$, so $\qpr{x} < \pr{y}.$
\end{proof}

\begin{lemma}\label{lem:slidingCompletely}
    Suppose that an entry $\qpr{x}$ becomes the least available entry at some point during the mixed rectification of a semistandard tableau $T \in \shSSYT(\nu/\mu)$. Then, once $\qpr{x}$ slides, it slides completely. 
\end{lemma}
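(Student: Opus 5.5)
The plan is to split the claim into two parts. First, once $\qpr{x}$ makes its first slide, it stays the least available entry as long as it has a bullet directly north or west of it. Second, once it has no such bullet, no later slide moves it. I read "subsequent slides" as the slides of the current round, i.e.\ before a new row of bullets is placed at the bottom row of the new inner shape $\lambda'$. In later rounds entries must of course move again.

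\emph{Second part.} I will check slide by slide in \eqref{eq:diagonal_slides}--\eqref{eq:SW} that each slide moves the active entry weakly north or west and moves the bullet(s) weakly south or east into the vacated cells. The diagonal slide moves two bullets, but both stay weakly southeast of the cell receiving the entry. Bullets are also deleted once nothing lies southeast of them. Hence the set of cells directly north or west of $\qpr{x}$ can never acquire a bullet through slides of other entries, and those slides never touch $\qpr{x}$'s cell. So once $\qpr{x}$ is no longer available, it stays put for the rest of the round.

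\emph{First part.} Suppose $\qpr{x}$ slides from cell $\bbb$ to cell $\bbb'$ and is still available. I must show that every other available entry $\qpr{z}$ satisfies $\qpr{z} \geq \qpr{x}$ in the order \eqref{eq:alphabet_order} with its tie-breaking. Here $\qpr{x}$ denotes the possibly modified value after the slide.
\begin{itemize}
\item Entries that were already available before the slide were at least the old $\qpr{x}$. So the only danger comes from (a) slides that lower the value of $\qpr{x}$, and (b) entries newly made available by the bullet(s) now in or near $\bbb$.
\item Newly available entries sit directly east or south of the vacated cells, hence weakly southeast of $\qpr{x}$'s original position and not yet moved. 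By \cref{lem:southeast} they are at least as large as the original $\qpr{x}$. Combined with the tie-breaking rule (northmost low is least, westmost high is least), this gives the comparison whenever the value of $\qpr{x}$ was preserved or raised. The raising cases are the first slides of \eqref{eq:not_singular_slides} and \eqref{eq:SW}, and \eqref{eq:primedSW}.
\end{itemize}

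\emph{Main obstacle.} The delicate case is the value-lowering second slide of \eqref{eq:SW}, where $\unpr{y}$ becomes $\pr{y}$. Here I will reuse the argument from the proof of \cref{lem:southeast}. The cell receiving $\pr{y}$ originally held some $\qpr{z}\leq \pr{y}$. Any newly available entry $\qpr{w}$ east or south of that region was southeast of $\qpr{z}$ in the original semistandard $T$, so $\qpr{w} \geq \qpr{z}$. I then need to rule out $\qpr{w} = \pr{y}$ placed north of the new $\pr{y}$, and $\qpr{w}$ strictly between $\qpr{z}$ and $\pr{y}$. For this I will use the semistandardness conditions (at most one $\pr{j}$ per row, at most one $\unpr{\imath}$ per column) together with \cref{prop:noPrimesDiagonals}, which applies because this slide sits at the diagonal. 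I expect this to need a short case analysis on whether $\qpr{w}$ lies in the same row or column as the affected cells. I also need to verify, for each slide type, that $\qpr{x}$ remains the unique choice determined by the priority ordering of the six collections.
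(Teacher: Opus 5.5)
There is a genuine gap; in fact both of your parts have one.

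Your second part rests on an inference that does not hold. Each slide does move the active entry weakly north or west and leave bullets in the vacated cells. But that is exactly how a bullet can reappear in $\bbb^\uparrow$ or $\bbb^\leftarrow$ after $\qpr{x}$ has stopped in $\bbb$. If the entry occupying $\bbb^\uparrow$ later slides north, or the one in $\bbb^\leftarrow$ later slides west, the bullet lands next to $\qpr{x}$ and $\qpr{x}$ is available again. (Also, \eqref{eq:primedSW} moves a second, non-available entry, so your claim that other slides never touch $\qpr{x}$'s cell is not automatic either.)

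What must be proved is that no entry weakly northwest of the stopped $\qpr{x}$ ever slides again in the current round. This is the heart of the paper's argument, which proceeds by induction on the number of complete slides. An entry northwest of $\qpr{x}$ has either already moved, in which case it slid completely by the inductive hypothesis, or it has not moved. In the latter case it is $\leq \qpr{x}$ by \cref{lem:southeast}, so it cannot be available, since otherwise it would have been chosen before $\qpr{x}$. Your proposal has no such induction and never compares $\qpr{x}$ with the entries to its northwest, so this step is missing.

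In your first part the direction of the danger is reversed. Lowering $\qpr{x}$ cannot make an entry that was $\geq$ the old value smaller than $\qpr{x}$. Raising can, because $\qpr{w}\geq \pr{y}$ says nothing about $\qpr{w}$ versus $\unpr{y}$. For instance, a second available $\pr{y}$ lying further south exceeds the northmost $\pr{y}$ only by the tie-break, yet is smaller than $\unpr{y}$. So your assertion that the comparison goes through \emph{whenever the value of $\qpr{x}$ was preserved or raised} fails precisely in the raising cases. Meanwhile the case you single out as the main obstacle, the lowering second slide of \eqref{eq:SW}, is harmless.

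The paper handles the raising slides by showing that they end the slide. Each one deposits $\qpr{x}$ in a diagonal cell whose west neighbour lies outside the shape and whose north neighbour is occupied:
\begin{itemize}
\item in the first slide of \eqref{eq:not_singular_slides}, by the entry shown directly above the diagonal bullet;
\item in \eqref{eq:SW}, and likewise \eqref{eq:primedSW}, because the higher-priority diagonal slide was infeasible. If $\ccc$ is the cell $\qpr{x}$ occupies just before entering the diagonal and $\ccc^{\uparrow\leftarrow}$ were a bullet, then the entry in $\ccc^\uparrow$ would be available and $\leq\qpr{x}$ by \cref{lem:southeast}, contradicting minimality.
\end{itemize}
Thus after a raise there is nothing left to compare, and the missing argument above is what guarantees $\qpr{x}$ never restarts.

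A smaller inaccuracy: once $\qpr{x}$ has moved, the entries adjacent to the newly vacated cell need not lie southeast of its original position, so \cref{lem:southeast} does not apply to them as you state. The right reason they are large enough is that they were already available while that cell was still a bullet.
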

\begin{proof}
    We proceed by induction on the number of complete slides that have been performed. 
    
    For the base case, note that every slide involves only one entry $\qpr{x}$, which corresponds to the least available by definition, so that $\qpr{x}$ will continue to slide until no longer available or at some point it changes value. Considering the list of possible moves, we can then restrict our attention to slides in which $\qpr{x}$ is slid into a diagonal position. 

    Since the first complete slide takes place just after gaps were created in the last row of $\mu$, there is only one diagonal position $\bbb$ occupied by a gap. That is, if $\qpr{x}$ changes from being $\pr{x}$ to $\unpr{x}$ as it is slid to the cell $\bbb$, then it will not move again because $\bbb$ is in the first row containing gaps. This completes the base case.

    Suppose now that the lemma holds after $n$ complete slides and a new slide is about to be performed. Again, let $\qpr{x}$ be the least available entry and consider the first slides of \eqref{eq:not_singular_slides} and of \eqref{eq:SW}, as they are the only ones potentially increasing the value of $\qpr{x}.$

    Throughout the $(n+1)$-st complete slide, we have that entries northwest of $\qpr{x}$ have either not moved at all or slid completely by the inductive hypothesis. It follows then that no entry northwest of $\qpr{x}$ is available, either because it has slid completely or else because it would have been least available instead of $\qpr{x}$ as per \cref{lem:southeast}. But then, no entry northwest of $\qpr{x}$ will move in the future. Therefore, if $\qpr{x}$ participates in the first slide of \eqref{eq:not_singular_slides}, it arrives to a diagonal position from which it will not move again. 

    On the other hand, performing the first slide of \eqref{eq:SW} implies that the slides of \eqref{eq:diagonal_slides} were infeasible. In particular, if $\qpr{x}\in\ccc$, then either $\ccc^\uparrow$ or $\ccc^{\uparrow \leftarrow}$ is occupied immediately before $\qpr{x}$ is slid into the diagonal position. If $\ccc^{\uparrow \leftarrow}$ is occupied, recall that its entry will not move again, being northwest of $\qpr{x}$; hence once $\qpr{x}$ enters the diagonal cell, it slides completely.

    If instead $\ccc^\uparrow$ is occupied but $\ccc^{\uparrow \leftarrow}$ is not, then $\ccc^\uparrow$ is available. But again, \cref{lem:southeast} forces $\ccc^\uparrow\leq \ccc$, contradicting that $\ccc$ is least available and not $\ccc^\uparrow$.    
\end{proof}

\begin{lemma}\label{lem:leastAvailableMonotone}
     Let $\mu = (\mu_1 \geq \dots \geq \mu_k > 0)$ and let $T \in \shSSYT_\bullet(\nu/\mu)$ be a semistandard tableau with holes whose bullets occupy the skew shape $\theta / \mu$ for some $\theta = (\mu_1 \geq \dots \geq \mu_k \geq \theta_{k+1} > 0)$. Suppose that $\qpr{x}<\qpr{y}$ are entries that become available during the rectification of $T.$ Then $\qpr{x}$ becomes least available before $\qpr{y}$ becomes least available. 
\end{lemma}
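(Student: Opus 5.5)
Assume, for contradiction, that $\qpr{y}$ becomes least available strictly before $\qpr{x}$ does. Fix the moment $t$ at which $\qpr{y}$ becomes least available. At time $t$, the entry $\qpr{x}$ is not available; otherwise $\qpr{x} < \qpr{y}$ would contradict the minimality of $\qpr{y}$. So $\qpr{x}$ becomes available only at some later time. The plan is to show that the bullets never again reach the neighbourhood of $\qpr{x}$ after $\qpr{y}$ has been processed. This would contradict the hypothesis that $\qpr{x}$ becomes available.

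\textbf{Locating $\qpr{x}$ at time $t$.} At time $t$, $\qpr{x}$ has not yet moved. It was never least available before $t$, and by \cref{lem:slidingCompletely} an entry moves only while it is least available. So \cref{lem:southeast} applies to $\qpr{x}$: every entry southeast of $\qpr{x}$ is at least $\qpr{x}$. In the same way, every entry that has moved by time $t$ has slid completely and is now frozen. The bullets at time $t$ form a connected region lying northwest of all unmoved entries that are not yet available. Because $\qpr{x}$ is not yet available, there must be an unmoved entry strictly between $\qpr{x}$ and the bullet region, sitting in $\bbb^\uparrow$ or $\bbb^\leftarrow$ for $\bbb \ni \qpr{x}$. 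Call this blocking entry $\qpr{z}$. By \cref{lem:southeast}, applied to $\qpr{z}$ and iterated along a northwest chain, we get $\qpr{z} \leq \qpr{x} < \qpr{y}$ (using the tie-breaking conventions where needed). The chain descends until it reaches an entry adjacent to the bullets. That entry is available and smaller than $\qpr{y}$, which contradicts the choice of $\qpr{y}$ as least available at time $t$.

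\textbf{The main obstacle.} The delicate part is that availability is not simply a matter of position. Slides can change values: $\pr{y}\to\unpr{y}$ in the first move of \eqref{eq:not_singular_slides} and in the first move of \eqref{eq:SW}, and $\unpr{y}\to\pr{y}$ in the second move of \eqref{eq:SW}. So "becomes available" must be tracked for entries whose values change. To handle this, I would first note that an entry's value changes only during its own complete slide. Before $\qpr{x}$ is least available, its value is its original one, so the comparison $\qpr{x}<\qpr{y}$ can be made with the values at time $t$, with one exception. If $\qpr{y}$ had already been lowered by the second move of \eqref{eq:SW}, I would rule this out by checking that such a move happens only during $\qpr{y}$'s own slide, which starts after time $t$.

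\textbf{The chain argument.} Second, I would verify that the northwest chain from $\qpr{x}$ to the bullet boundary stays in the current skew shape. This uses the fact, from the hypothesis on $\theta/\mu$, that the bullets fill a union of bottom rows, so the gaps form a shifted shape. Within the chain, each step up or left either weakly decreases the entry by semistandardness or passes a diagonal cell, where entries are high by \cref{prop:noPrimesDiagonals}. The ties $\pr{y}$ versus $\unpr{y}$ and the tie-breaking rule (northmost low entries first, westmost high entries first) then need a short case check to confirm that the entry found at the boundary is strictly least.
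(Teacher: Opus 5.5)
Your proposal is correct and follows essentially the same argument as the paper. At the moment $\qpr{y}$ first becomes least available, $\qpr{x}$ is unavailable. Any unmoved available entry $\qpr{z}$ weakly northwest of $\qpr{x}$ would satisfy $\qpr{z}\leq\qpr{x}<\qpr{y}$ by \cref{lem:southeast}, so no such entry exists, and therefore no bullet can ever reach a neighbour of $\qpr{x}$; this contradicts $\qpr{x}$ later becoming least available.

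The one place to tighten is the chain step. Do not rely on the unproven ``connected bullet region'' claim. Instead, note that each blocker in $\bbb^\uparrow$ or $\bbb^\leftarrow$ must itself vacate its cell later, since frozen entries and cells of $\mu$ never receive bullets. So each blocker is unmoved and must later become available, and the descent can only stop at an entry that is already available at time $t$.
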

\begin{proof}
    We restrict our attention to the rows of $T$ strictly below the last row of $\mu$. The remaining entries never become available so we can safely exclude them from the analysis. 
    
    Suppose by contradiction that $\qpr{x}<\qpr{y}$ but that $\qpr{y}$ becomes least available before $\qpr{x}$. Without loss of generality, take $\qpr{y}$ the first such entry in the mixed rectification process to falsify the lemma, and take $\qpr{x}$ the first such entry to become least available with respect to $\qpr{y}$. The instant $\qpr{y}$ becomes least available, i.e., the configuration after a complete slide that has $\qpr{y}$ least available for the first time, must not also have $\qpr{x}$ be available. Moreover, as $\qpr{x}\in \bbb$ becomes least available later, we have now that at least one of $\bbb^\uparrow$ or $\bbb^\leftarrow$ is occupied by an entry that has not moved, but will move later. 
    
    Next, note that there are two types of entries weakly northwest of $\qpr{x}$: those that have not moved and those that have. (Empty entries never move nor affect the displacement of bullets or which entries become available, so we may ignore them.) For those that have moved, we know by \cref{lem:slidingCompletely} that they will not participate in a slide again. On the other hand,  those that have not moved fall again in two categories: available and not available. Entries $\qpr{z}$ that have not moved and are available, satisfy $\qpr{z}\leq \qpr{x}$ by \cref{lem:southeast}. Hence, recalling that $\qpr{y}$ is least available at this moment, we obtain $\qpr{y}\leq \qpr{z}\leq \qpr{x}$, a contradiction. Accordingly, there are no entries that have not moved and are available at this time. Thus all entries weakly northwest of $\qpr{x}$ are not available, but this is only possible if there are no bullets weakly northwest of $\qpr{x}$. Then $\qpr{x}$ will not become least available at any later time, contradicting that it becomes least available after $\qpr{y}.$
\end{proof}

The following is a restatement of \cref{lem:leastAvailableMonotone} that is convenient for later use.

\begin{corollary}\label{cor:leastAvailableMonotone}
    Let $T$ be a semistandard tableau with holes. If $\qpr{w}$ becomes least available before $\qpr{z}$ does during the rectification of $T$, then $\qpr{w}\leq \qpr{z}.$
\end{corollary}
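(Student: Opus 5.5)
The plan is to derive the corollary from \cref{lem:leastAvailableMonotone} by taking its contrapositive. Suppose $\qpr{w}$ becomes least available before $\qpr{z}$ does. We argue by contradiction and assume $\qpr{z} < \qpr{w}$. Both entries become least available at some point, so in particular both become available during the rectification.

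The first step is to check that the hypotheses of \cref{lem:leastAvailableMonotone} hold. Mixed rectification runs in stages, and each stage begins by placing bullets in the bottom row of the current inner shape. That is exactly the configuration in the lemma, with bullets filling $\theta/\mu$ and $\theta$ agreeing with $\mu$ except in its final row.

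There are two cases, depending on when the two entries become least available.

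\emph{Same stage.} Apply \cref{lem:leastAvailableMonotone} to the tableau with holes at the start of that stage, with $\qpr{x} = \qpr{z}$ and $\qpr{y} = \qpr{w}$. The lemma says $\qpr{z}$ becomes least available before $\qpr{w}$, which contradicts our assumption.

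\emph{Different stages.} Here I would make explicit what the paper implicitly means by ``the rectification of $T$'' in this restatement. In \cref{lem:leastAvailableMonotone}, ``rectification'' refers to the sliding within one stage, since the bullet configuration is fixed as $\theta/\mu$. So I would read the corollary as a statement within a single stage, with $T$ the tableau with holes at the start of that stage. Under this reading, only the same-stage case arises, and the corollary is just the contrapositive of the lemma for strict inequality.

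The main obstacle I expect is a degenerate point: one entry could become least available more than once, or an entry could change value (for instance $\pr{y}$ turning into $\unpr{y}$) between the two events. To handle this, I would compare the values that the entries have at the moment each first becomes least available. \cref{lem:slidingCompletely} guarantees that once an entry slides it slides completely, so value changes happen only during its own complete slide. The entry's value when it first becomes available is therefore the relevant one, and the lemma applies to those values.

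The conclusion is that $\qpr{w} \leq \qpr{z}$.
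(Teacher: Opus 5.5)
Your argument is correct and matches the paper's: the corollary is just the contrapositive of \cref{lem:leastAvailableMonotone}, which the paper calls ``immediate.'' Restricting to a single stage is the right reading, and it is actually required, because the statement fails across stages. In the paper's example for $\unpr{7}\unpr{3}\unpr{9}\unpr{4}$, $\unpr{9}$ first becomes least available in the second stage, before $\unpr{4}$ first does in the third, even though $\unpr{4} < \unpr{9}$.
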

\begin{proof}
    It is immediate from \cref{lem:leastAvailableMonotone}.
\end{proof}

\begin{lemma}\label{lem:well-defined}
Mixed rectification of $T\in \shSSYT(\nu/\lambda)$ is a well-defined process that produces a semistandard tableau of straight shape. Furthermore, the intermediate steps are semistandard tableaux with holes.
\end{lemma}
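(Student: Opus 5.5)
We argue by induction on the number of individual mixed slides, with the inductive invariant that the current configuration is a semistandard tableau with holes in which the bullets form a skew shape $\theta/\mu'$ of the kind in \cref{lem:leastAvailableMonotone}. Three things must be shown: each step is well defined, each step preserves semistandardness, and the whole process terminates in a straight shape.

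\emph{Well-definedness of a step.} Suppose some entry is available. The tie-breaking rule (northmost $\pr{y}$ least, westmost $\unpr{y}$ least) makes the least available entry $\qpr{y}\in\bbb$ unique, since equal low letters lie in distinct rows and equal high letters in distinct columns. We then check that the six collections \eqref{eq:diagonal_slides}--\eqref{eq:SW} cover every local configuration of $\bbb^\uparrow,\bbb^\leftarrow,\bbb^{\uparrow\leftarrow}$ (bullet, entry, or outside the shape, together with whether $\bbb^\uparrow$ is diagonal). We also check that within the first applicable collection exactly one slide applies. This follows by splitting on whether $\qpr{y}$ is low or high, and on whether the neighbouring entry equals $\qpr{y}$.

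\emph{Preservation of semistandardness.} This is a local verification around the two or three cells touched by each slide. The diagonal slides and the plain row and column slides behave as in ordinary jeu de taquin. Here \cref{lem:southeast} gives the needed inequalities against entries to the north-west that have not yet moved. For entries that have moved, we use \cref{lem:slidingCompletely} together with \cref{cor:leastAvailableMonotone}: every entry that moved earlier is at most $\qpr{y}$ and is now frozen.

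The delicate cases are the value-changing slides, namely the first slide of \eqref{eq:not_singular_slides}, the slide \eqref{eq:primedSW}, and the two slides in \eqref{eq:SW}.
\begin{itemize}
\item When $\pr{y}$ is raised to $\unpr{y}$, we must rule out a second $\unpr{y}$ in its new column.
\item When $\unpr{y}$ is lowered to $\pr{y}$, we must rule out a second $\pr{y}$ in its new row.
\end{itemize}
The side conditions $\qpr{x}\neq\pr{y}$ and $\qpr{x}\neq\unpr{y}$ in \eqref{eq:singular_slides} exist precisely to exclude the obvious conflicts. The remaining conflicts come from entries farther away in the same row or column. These are handled by combining the tie-breaking rule (any competing $\pr{y}$ to the north would have been least available first) with \cref{lem:leastAvailableMonotone}. \cref{prop:noPrimesDiagonals} ensures the diagonal condition.

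\emph{Termination and shape.} Each slide moves a bullet strictly south or east, and a bullet is deleted once nothing lies to its south-east. Hence each round ends after finitely many slides, and no entries are then available. At that point the bullets have all been deleted, since a surviving bullet would have an entry to its south-east and hence an available entry adjacent to it in the bullet region. The entries then fill a skew shape $\nu'/\lambda'$ in which $\lambda'$ is $\lambda$ with its last row removed, so $\ell(\lambda')<\ell(\lambda)$. After finitely many rounds we therefore reach straight shape.

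The main obstacle is the case analysis for semistandardness under the value-changing slides. In particular, proving that raising $\pr{y}$ to $\unpr{y}$ never creates two $\unpr{y}$'s in a column requires the global monotonicity of \cref{lem:leastAvailableMonotone}, not just local information.
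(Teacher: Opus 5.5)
Your skeleton matches the paper's proof: induction over single slides, \cref{lem:southeast} for unmoved entries to the northwest, \cref{lem:slidingCompletely} with \cref{cor:leastAvailableMonotone} for entries that already moved, and separate treatment of the value-changing slides. Your well-definedness and termination parts go beyond what the paper writes. The genuine gap is exactly the case you defer.

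Take the slide of \eqref{eq:SW} that moves a diagonal $\unpr{y}$ up and lowers it to $\pr{y}$. It is not enough to rule out a second $\pr{y}$ in the new row. You also need the diagonal entry $\qpr{x}$ to its left to satisfy $\qpr{x}\leq\pr{y}$, i.e.\ $\qpr{x}\neq\unpr{y}$. \cref{cor:leastAvailableMonotone} only compares values at the moment entries become least available. So it gives at best $\qpr{x}\leq\unpr{y}$, and it cannot see that $\qpr{x}$ may have been raised on its way to the diagonal. The tie-breaking rule does not help. In fact this case cannot be closed: with the slides exactly as listed (tie-breaks included), the statement fails for
\[
T=\begin{ytableau}
\blank & \blank & \pr{2} \\
\none & \unpr{1} & \pr{2} \\
\none & \none & \unpr{2}
\end{ytableau}\in\shSSYT\bigl((3,2,1)/(2)\bigr).
\]

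The computation goes as follows.
\begin{enumerate}
\item Bullets go in $(1,1),(1,2)$, and $\unpr{1}$ slides diagonally to $(1,1)$.
\item Both $\pr{2}$'s are now available. The northmost one slides from $(1,3)$ to $(1,2)$.
\item Now $\pr{2}\in(2,3)$ is least available, with bullets at $(1,3)$ and $(2,2)$ and with $\pr{2}$ to its northwest. So \eqref{eq:singular_slides} is blocked, and the first slide of \eqref{eq:not_singular_slides} puts $\unpr{2}$ in the diagonal cell $(2,2)$.
\item The only available entry is then $\unpr{2}\in(3,3)$. The lowering slide of \eqref{eq:SW}, with $\qpr{x}=\unpr{2}$, creates the row $\unpr{2}\,\pr{2}$.
\item A final column slide ends with rows $\unpr{1}\,\pr{2}\,\pr{2}$ and $\unpr{2}$, which is not semistandard.
\end{enumerate}

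The paper's own treatment of this slide appeals to \eqref{eq:primedSW} not having been performed, and it has the same hole: here the diagonal $\unpr{2}$ was produced by \eqref{eq:not_singular_slides}, not in a configuration where \eqref{eq:primedSW} was a candidate. So the definition must change before any proof can work. For instance, requiring $\qpr{x}\neq\unpr{y}$ in that slide lets the plain column slide fire, and this example then gives rows $\unpr{1}\,\pr{2}\,\unpr{2}$ and $\unpr{2}$. The whole case analysis would then have to be redone.

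Two smaller issues:
\begin{itemize}
\item Your invariant that the bullets form a skew shape as in \cref{lem:leastAvailableMonotone} is not preserved mid-round. In the paper's first example, bullets sit on both sides of $\unpr{1}$ as it crosses row $1$. Apply that lemma from the start of each round instead.
\item The first slide of \eqref{eq:not_singular_slides} is a special case of its second, so ``exactly one slide applies'' needs a precedence convention.
\end{itemize}
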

\begin{proof}
By \cref{prop:noPrimesDiagonals}, we know that all the intermediate steps have no high entries on the diagonal. Hence it remains to check the increasingness conditions.

We will show that each intermediate tableau is semistandard by induction in the number of mixed rectification steps performed. For one step the result is clear by inspection of the mixed slides. Assume that the lemma holds for $m\leq n$ steps.

Calculating the mixed rectification of $T$ involves creating the bottommost possible row of gaps, moving available entries until all the gaps disappear, and repeating the process. Moreover, every such complete iteration results in a new tableau with holes that is in particular a skew tableau.

Suppose that $n$ slides have been performed, creating the semistandard tableau with holes $U$. Consider the effect of a  slide on $U$. 

A diagonal slide of the form
\[\begin{ytableau}
    \bullet & \bullet \\
    \bullet & {\mathbf{y}}
\end{ytableau} \quad \rightarrow \quad \begin{ytableau}
    {\mathbf{y}} & \bullet \\
    \bullet & \bullet
\end{ytableau}\]
moves $\qpr{y}$ from a box $\bbb$ to a box $\bbb^{\uparrow,\leftarrow}$. If $\qpr{y}$ becomes the westmost entry of its new row, we only need to worry about letters $\qpr{z}$ east of $\bbb^\uparrow$; otherwise, it suffices to note that entries left of $\bbb^{\uparrow,\leftarrow}$ became least available before $\qpr{y}$ and \cref{cor:leastAvailableMonotone} guarantees that the order is preserved. 
 
If there is an entry $\qpr{z}$ to the right of $\bbb^\uparrow$, then $\qpr{z} \geq \qpr{y}$ because otherwise $\qpr{y}$ would not be the least available entry. If there is one gap or more to the east of $\bbb^\uparrow$, then the first entry $\qpr{a}$ that is not a gap will either be slid before $\qpr{y}$ so that it is no longer in the same row, or end next to $\bbb^\uparrow$. In the former case the entry does not matter for our considerations, while in the latter we can repeat the argument with $\qpr{z}.$ In either case, the resulting tableau with holes satisfies weak row increasingness. Moreover, if $\qpr{z} = \qpr{y}$, then the fact that $\qpr{y}$ is the least available entry implies that $\qpr{y} = \unpr{y}$, so we do not have two equal low entries in the same row.

Similarly, if there is an entry below $\bbb^\leftarrow$, though not necessarily contiguous, then either it is equal to $\qpr{y}=\pr{y}$ and monotonicity is respected; or it must be greater than $\qpr{y}$ if the latter is to be available. Hence, the resulting tableau is semistandard. 

Analyzing other diagonal slides, we see that just from the fact that $\qpr{y}$ is currently available, it can also be deduced that the final configuration is monotone by rows and columns. 

For slides of the form 
\[\begin{ytableau}
    \qpr{x} & \bullet \\
    \bullet & \pr{y}
\end{ytableau} \quad \rightarrow \quad
    \begin{ytableau}
        \qpr{x} & \pr{y} \\
        \bullet & \bullet
    \end{ytableau},\]
there are two cases: either $\qpr{x}$ has not moved and $\qpr{x}\leq \pr{y}$ by \cref{lem:southeast}, or it has moved and again $\qpr{x}\leq \pr{y}$ because of \cref{cor:leastAvailableMonotone}. In both cases, this results in a configuration that is monotone by rows. Monotonicity by columns is clear because only the positions of the gap and $\pr{y}$ have been changed, and then such a slide preserves that the tableau is semistandard with holes. 

The analysis of the remaining slides that do not change the value of the entry is analogous.

For the first slide in \eqref{eq:not_singular_slides}, column monotonicity is clear, while row monotonicity holds by the inductive hypothesis and the fact that no two low entries of the same value can lie in the same row. The first slides of \eqref{eq:SW} and \eqref{eq:primedSW} also result in a semistandard tableau with holes by the inductive hypothesis.

It remains then to consider the last slide of \eqref{eq:SW}. Column monotonicity is follows in this case from the inductive hypothesis, since entries above the $\unpr{y}$ are at most $\pr{y}$. Hence, let us establish row monotonicity. There are two possibilities for $\qpr{x}$: either it has not moved yet or it has completely slid. If the former, then as $\unpr{y}$ has not moved either, both were part of the original tableau $T$. Since both were occupying diagonal cells, it follows that $\qpr{x}=\unpr{x}<\unpr{y}$ and the slide results in a configuration monotone by rows. If the latter, by virtue of \cref{cor:leastAvailableMonotone}, $\qpr{x} \leq \unpr{y}$ and the fact that the slide of \eqref{eq:primedSW} was not performed indicates that $\qpr{x}<\unpr{x}$ so that monotonicity is respected once more. 
\end{proof}

    We denote the skew tableau containing $T$ and $\unpr y$ in the antidiagonal by
    \[ T\oplus \unpr y \coloneqq \begin{ytableau}
            \blank & \cdots & \unpr y\\
            \none &  T & \none 
        \end{ytableau}. \]

Consider the mixed rectification of $T'$ defined by
     $T'\coloneqq  T \oplus \unpr{y}.$ Let $U$ be one of the intermediate configurations obtained. We define the \newword{nucleus} of $U$ to be the largest set of filled boxes in $U$ that contains the top-leftmost box and is a straight-shaped tableau.

\begin{lemma}
    Suppose that an entry $\qpr{x}$ becomes the least available entry at some point during the mixed rectification of 
     $T'\coloneqq  T \oplus \unpr{y}.$
     Then $\qpr{x}$ participates in a sequence of slides until it joins the nucleus and becomes unavailable. After this, it is never again available.
\end{lemma}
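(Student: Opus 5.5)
We argue by induction on the number of complete slides performed during the mixed rectification of $T' = T \oplus \unpr y$. We track the invariant that at every intermediate configuration $U$ the filled boxes split into three kinds: the nucleus, which is a straight-shaped tableau in the top-left corner; entries of $T$ that have not yet moved; and possibly $\unpr y$ (or its lowered/raised avatar) if it has not yet moved. The claim is that every entry that has ever been least available lies in the nucleus once its complete slide ends.

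\textbf{Base case.} The first rectification round creates gaps in the bottom row of the empty staircase region above $T$. The only available entries are those adjacent to the gaps. Since $T$ is itself straight-shaped, the entries of $T$ that become least available move by diagonal slides \eqref{eq:diagonal_slides} or by the column/row slides. As they move, they fill the cells directly northwest of the still-unmoved portion of $T$. We check that the nucleus then consists exactly of the moved entries together with the unmoved part of $T$ that is contiguous to them.

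\textbf{Inductive step.} Suppose $\qpr{x}$ is least available. By \cref{lem:slidingCompletely}, once $\qpr{x}$ slides it slides completely, say from $\bbb$ to $\ccc$.
\begin{itemize}
\item \emph{Reaching the nucleus.} We must show that when the complete slide ends, $\ccc^\uparrow$ and $\ccc^\leftarrow$ are filled or outside the shape. Otherwise $\qpr{x}$ would still be available, contradicting that the slide stopped. We must also show that $\ccc^\uparrow$ and $\ccc^\leftarrow$ lie in the nucleus. We argue this as in \cref{lem:slidingCompletely}: entries northwest of $\qpr{x}$ have either slid completely, and so are in the nucleus by induction, or have never moved. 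Using \cref{lem:southeast} and \cref{cor:leastAvailableMonotone}, an unmoved entry northwest of $\qpr{x}$ that is available would be smaller than $\qpr{x}$, which is a contradiction.
\item \emph{Never available again.} Once $\qpr{x}$ sits in the nucleus, every cell weakly northwest of it is filled, since the nucleus is straight-shaped. No later slide moves an entry south or east, and bullets are only created in the bottom row of the current inner shape. Hence no bullet can ever appear in $\ccc^\uparrow$ or $\ccc^\leftarrow$, so $\qpr{x}$ is never again available.
\end{itemize}

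\textbf{Main obstacle.} The hardest part is the special entry $\unpr y$ on the antidiagonal. It is not part of a straight shape with $T$, so it may be separated from the nucleus by gaps. When it is least available, it moves along its row by the one-row slides of \eqref{eq:SW}, or changes value via \eqref{eq:primedSW} or the second slide of \eqref{eq:SW}. We need to verify that it keeps sliding leftward or upward until it abuts the nucleus, rather than stopping in an isolated position. For this we would first try to show that at each round the gaps to the left of $\unpr y$ in its row are exactly the bullets just created, so that $\unpr y$ remains available until it touches a filled cell. We then check that any filled cell it can touch is already in the nucleus.
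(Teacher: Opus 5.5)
There is a genuine gap at the central step of your inductive argument. In \emph{Reaching the nucleus} you say that if $\ccc^\uparrow$ or $\ccc^\leftarrow$ still held a bullet, then $\qpr{x}$ would still be available, ``contradicting that the slide stopped.'' That is not a contradiction by itself. The process always moves the \emph{least} available entry, and each slide of $\qpr{x}$ vacates a cell, which can make the entries immediately below or to the right of that cell newly available. Nothing you say rules out $\qpr{x}$ halting while still available because one of those entries has become least, with the bullet next to $\qpr{x}$ filled later by some other entry. \cref{lem:slidingCompletely} only says that $\qpr{x}$ never moves again; it does not say that $\qpr{x}$ stops in an unavailable position.

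The missing idea is monotonicity, and it is essentially the whole of the paper's proof. By \cref{cor:leastAvailableMonotone}, any entry $\qpr{z}$ that becomes least available after $\qpr{x}$ satisfies $\qpr{z}\geq\qpr{x}$. If $\qpr{z}=\qpr{x}$, the tie-breaking rule favours $\qpr{x}$: it has just moved north or west of the vacated cell, and semistandardness forbids two equal low entries in a row or two equal high entries in a column. Hence as long as $\qpr{x}$ is available it stays least available and keeps sliding, so its complete slide can only end where it is unavailable.

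This argument does not depend on which entry is sliding, so it also covers $\unpr{y}$, which needs no special treatment. As written, your \emph{Main obstacle} paragraph is only a plan (``we would first try to show\dots''), so the proposal is incomplete in exactly the case you call hardest. The plan would not close it either: showing that $\unpr{y}$ ``remains available until it touches a filled cell'' again confuses available with least available.

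Two smaller points. First, for $T\oplus\unpr{y}$ with $T$ straight, the inner shape is the single row $(\lambda_1+1)$, so there is exactly one round of bullets; there is no staircase region and no ``each round.'' The base case is simply that the first complete slide, of either $\unpr{y}$ or the diagonal entry of the first row of $T$, ends in cell $(1,1)$. Second, knowing that unmoved entries northwest of $\qpr{x}$ are unavailable does not by itself put $\ccc^\uparrow$ and $\ccc^\leftarrow$ in the nucleus. You need to propagate: take a maximal bullet in the shifted order ideal below $\ccc$; the next cell on a chain up to $\ccc$ would then hold an available entry, which your induction together with \cref{lem:southeast} excludes. Your argument for ``never available again'' is fine: bullets only drift south or east, and new bullets are placed only in empty inner cells.
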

\begin{proof}
    Let us establish the lemma by induction on the number of complete slides. The base case is clear as the nucleus after one complete slide is a single diagonal cell. 
    
    Suppose that $n$ complete slides have been performed and assume by contradiction that the next complete slide, displacing $\qpr{x}$, does not result in $\qpr{x}$ joining the nucleus. Then, $\qpr{x}$ is still available when it reaches its final position $\bbb$, but an entry $\qpr{y}\neq \qpr{x}$ becomes least available immediately after. Since $\qpr{x}$ was least available prior to this moment, it must have slid in such a way as to make $\qpr{y}$ available. Thus, $\qpr{y}\geq \qpr{x}$ by \cref{cor:leastAvailableMonotone}, and if $\qpr{y}=\qpr{x}$, we have that the entry $\qpr{x}$ is westmost if low, northmost if high. This contradicts the fact that $\qpr{y}$ will be least available after $\qpr{x}$ moves into $\bbb$, because $\qpr{x}$ is still available.
\end{proof}

    The moral of the following lemma is that the only non-trivial slides are slides that happened as a consequence of a previous vertical or horizontal slide.

\begin{lemma}\label{lem:nonTrivialSlides}
Suppose that an entry $\qpr{x}$ becomes the least available entry at some point during the mixed rectification of  $T'\coloneqq   T \oplus \unpr{y} ,$ where $T$ is a tableau of straight shape.

        Let the first slide of $\qpr{x}$ be from $\bbb$ to $\bbb^\leftarrow$ or $\bbb^\uparrow$. Then, when $\qpr{x} \in \bbb$, we have $\qpr{z} \in \bbb^{\leftarrow\uparrow}$, where $\qpr{z}$ reached $\bbb^{\leftarrow\uparrow}$ by a sequence of purely horizontal or purely vertical slides.  
\end{lemma}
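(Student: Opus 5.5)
The plan is to compare each intermediate configuration of the mixed rectification of $T'=T\oplus\unpr y$ with a rigid diagonal translate of $T$, and to show that any departure from that translate is caused by a single chain of non-diagonal moves. I will work by induction on the number of complete slides, using \cref{lem:slidingCompletely} and the nucleus lemma just proved. Together these say that every entry is in one of three states: it has not moved yet; it is the entry currently sliding; or it has slid completely into the nucleus and will never move again.

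\emph{Step 1: the cell $\bbb^{\leftarrow\uparrow}$ is filled.} Suppose $\qpr x\in\bbb$ is least available and its first slide goes to $\bbb^\leftarrow$ or $\bbb^\uparrow$. Then no diagonal slide \eqref{eq:diagonal_slides} was possible, because the diagonal slides come first in the priority list. I will check that the bullets in a tableau with holes arising during mixed rectification always form a skew region whose cells are closed under moving north or west within the region. By \cref{lem:well-defined} the entries form a semistandard tableau with holes, and bullets are only created as a full bottom row of the inner shape and are only pushed south-east. Granting this closure, if $\bbb^{\leftarrow\uparrow}$ held a bullet, then $\bbb^\uparrow$ and $\bbb^\leftarrow$ would each be a bullet or lie outside the shape. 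In either case one of the two diagonal slides would apply, which is a contradiction. So $\bbb^{\leftarrow\uparrow}$ contains an entry $\qpr z$. This entry is north-west of $\qpr x$, so by the argument in the proof of \cref{lem:slidingCompletely} it is either unmoved or has already slid completely.

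\emph{Step 2: $\qpr z$ arrived by purely horizontal or purely vertical slides.} Here I use that $T$ has straight shape. Entries of $T'$ that have moved only by diagonal slides, or not at all, keep exactly the relative positions they have in $T$. In $T$, if a cell $\bbb$ of $T$ has $\bbb^{\leftarrow\uparrow}$ in $T$, then $\bbb^\uparrow$ and $\bbb^\leftarrow$ are also cells of $T$ (or $\bbb^\leftarrow$ is off the diagonal). Suppose both $\qpr x$ and $\qpr z$ had moved only diagonally. Then the neighbours of $\qpr x$ at $\bbb^\uparrow$ and $\bbb^\leftarrow$ would be the entries of $T$ in the corresponding cells. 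Those entries lie north-west of $\qpr x$, hence are frozen or unmoved, so they would not have been replaced by bullets. That contradicts the availability of $\qpr x$ through $\bbb^\uparrow$ or $\bbb^\leftarrow$. Since $\qpr x$ is making its first move, it has not moved at all, so it must be $\qpr z$ that left the diagonal translate of $T$. In other words, $\qpr z$ made at least one non-diagonal slide.

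I would then show that the non-diagonal moves performed by a single entry during its complete slide are all of one kind: all horizontal, or all vertical. I will treat the slides of \eqref{eq:singular_slides}, \eqref{eq:not_singular_slides}, \eqref{eq:primedSW} and \eqref{eq:SW} as horizontal or vertical according to the direction in which the moving entry travels. The idea is that the unique chain of disturbance, which originates from the inserted $\unpr y$ on the antidiagonal, propagates like a bumping path. Inductively, the entry that made the previous non-diagonal move sits at the north-west corner of the next one. This is exactly the row- or column-displacement of mixed insertion, and along it an entry only slides in one direction until it joins the nucleus.

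The main obstacle is this last claim. For each of the non-diagonal slides I must verify that once the moving entry leaves its diagonal-translate position horizontally (respectively vertically), the only bullet it can subsequently see is again to its west (respectively north). I expect this to follow from \cref{lem:southeast} together with the fact that every other entry is either frozen in the nucleus or still in translate position. The value-changing slides of \eqref{eq:primedSW} and \eqref{eq:SW} at the diagonal will need separate, hands-on checking.
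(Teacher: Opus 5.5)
Your proposal has a genuine gap: the heart of the lemma is left unproved. That $\qpr z$ reached $\bbb^{\uparrow\leftarrow}$ by \emph{purely} horizontal or \emph{purely} vertical slides is exactly the claim you call ``the main obstacle.'' The heuristic you offer for it (the disturbance propagates like a bumping path, ``exactly the row- or column-displacement of mixed insertion'') presupposes the correspondence of \cref{thm:mixedRectification}, which this lemma is a step toward. The missing idea is a short local argument, as in the paper. It needs no comparison with a translate of $T$ and no case check of the value-changing slides.

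Write $\bbb=(r,c)$. By \cref{lem:slidingCompletely}, $\qpr x$ sat motionless in $\bbb$, so the last slide of $\qpr z$ was not diagonal, since it would have started at $\bbb$. Suppose that last slide was vertical, so $\qpr z$ climbed column $c-1$ through $\bbb^\leftarrow$. If an earlier slide of $\qpr z$ were horizontal or diagonal, the last such slide brought $\qpr z$ into column $c-1$ from column $c$, strictly below $\bbb$. Just before that slide, two things hold:
\begin{itemize}
\item $\qpr x\le\qpr z$ by the column condition of \cref{lem:well-defined}; equality forces both to be low, and then the northmost, $\qpr x$, is the lesser.
\item $\bbb^\leftarrow$, which lies on the rest of $\qpr z$'s route, already held a bullet.
\end{itemize}
So $\qpr x$ was available and $\qpr z$ was not least available, a contradiction. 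The horizontal case is symmetric: equal entries in a row are high, and the westmost is the lesser.

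Your two preliminary steps also fail as written.

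In Step~1 the closure property is false. Take $T=\unpr1\,\unpr3$ and $\unpr y=\unpr2$. After $\unpr1$ slides diagonally and $\unpr2$ slides into $(1,2)$, the diagonal cell $(2,2)$ is a bullet directly below the entry $\unpr2$. Even if the property held, it propagates bullets the wrong way: you need to go from a bullet in $\bbb^{\uparrow\leftarrow}$ east to $\bbb^\uparrow$ and south to $\bbb^\leftarrow$. The correct reason is minimality of $\qpr x$. If $\bbb^{\uparrow\leftarrow}$ held a bullet and, say, $\bbb^\leftarrow$ held an entry, that entry would be available and, by row increase and the tie-break, would precede $\qpr x$.

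In Step~2 there are two problems.
\begin{itemize}
\item Diagonally slid entries and unmoved entries do not keep their relative positions from $T$; they are offset by one cell.
\item ``Frozen or unmoved, hence not replaced by bullets'' is a non sequitur. Nothing in your argument stops an entry that is frozen now from having earlier left $\bbb^\uparrow$ northward, which it can do with $\qpr z$ fixed to its west. That would leave exactly the bullet that makes $\qpr x$ available.
\end{itemize}
A diagonal arrival of $\qpr z$ is excluded simply because it would start at $\bbb$. The case where $\qpr z$ never moved is therefore not handled by your argument and needs a separate argument; the paper's proof also tacitly assumes $\qpr z$ has moved.
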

\begin{proof}
    Suppose that $\qpr{x}$ becomes the least available entry while sitting in box $\bbb$. Since the first slide is either horizontal or vertical, the box $\bbb^{\uparrow \leftarrow}$ cannot be empty; otherwise, either $\qpr{x}$ performs a diagonal slide, or \cref{lem:well-defined} implies that one of the entries adjacent to $\qpr{x}$ is least available. Say $\qpr{z} \in \bbb^{\uparrow \leftarrow}$. 
    
    Note that before $\qpr{x}$ becomes least available, it was motionless in $\bbb$ by \cref{lem:slidingCompletely}.
    Then, the final slide bringing $\qpr{z}$ to $\bbb^{\uparrow \leftarrow}$ cannot have been diagonal, since $\qpr{x} \in \bbb$. There are two cases for the last slide of $\qpr{z}$.
    If it was vertical, then by the increasingness conditions of \cref{lem:well-defined}, all previous slides of $\qpr{z}$ must also have been vertical. Indeed, to perform a horizontal or diagonal slide, $\qpr{z}$ would have been in the column of $\qpr{x}$ at some point, so that by \cref{lem:well-defined} it would have held that $\qpr{x}\leq \qpr{z}$ at that moment; but then \cref{lem:slidingCompletely} implies that at that moment the positions $\qpr{z}$ will pass through are bullets, meaning that $\qpr{x}$ is available, hence should have been least available at that moment, not $\qpr{z}$, a contradiction. 

    Suppose instead that the last slide was horizontal. A similar analysis and the increasingness conditions of \cref{lem:well-defined} likewise show that all previous slides of $\qpr{z}$ are forced to be horizontal.
\end{proof}

Mixed rectification can be made to simulate mixed insertion by properly positioning the tableau $T$ and entry $\unpr{y}$ to be inserted, into the configuration $T\oplus \unpr{y}$, and rectifying. 

Entries in the mixed rectification process that perform at least one non-diagonal slide will correlate to bumped entries during mixed insertion. That being said, an entry may sometimes perform diagonal slides in addition to non-diagonal slides, so that a more rigorous concept is needed to capture such entries inside mixed rectification.

\begin{definition}
    Let $\qpr{x}$ be an entry of $T\oplus \unpr{y}$. Then $\qpr{x}$ is singular if at some moment of the mixed rectification of $T\oplus \unpr{y}$
    \begin{itemize}
        \item $\qpr{x}=\unpr{y}$; or,
        \item the first time $\qpr{x}$ moves, it does so from a cell which is in the same diagonal as a singular entry, and immediately southeast of it. 
    \end{itemize}
\end{definition}

    As it will become clear, the first slide of any singular entry must also be singular, so that an alternative characterization may be given to that effect. 

    \begin{theorem}\label{thm:mixedRectification}
        Let $T$ be an arbitrary shifted Young tableau and $\unpr y$ a high letter. Then performing the mixed insertion of $\unpr y$ on $T$ results in the tableau 
        $\mathrm{rect}_\mix \left( T\oplus \unpr y \right)$.
    \end{theorem}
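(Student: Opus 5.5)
The plan is to induct on the number of rows of $T$, or more precisely on the number of bumping steps in the mixed insertion of $\unpr y$ into $T$. We set up a step-by-step correspondence between the insertion and the mixed rectification of $T\oplus\unpr y$, using the singular entries as the bridge. Concretely, we track the chain of singular entries $\qpr{y}_0=\unpr y,\qpr{y}_1,\qpr{y}_2,\dots$ in the order they become least available. We want to show that they are exactly the successive letters inserted during the mixed insertion: $\unpr y$, then the letter bumped from row $1$, and so on, with the change $\unpr{y}_i \mapsto \pr{y}_i$ occurring precisely when the insertion switches to column insertion.

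First, the non-singular entries. In $T\oplus\unpr y$ all of $T$ sits southwest of $\unpr y$. Rectification proceeds row by row, placing bullets in the bottom row of the current inner shape. An entry that is not singular only ever performs diagonal slides (\eqref{eq:diagonal_slides}), so the untouched part of $T$ is translated rigidly one step northwest. By \cref{lem:slidingCompletely} and \cref{lem:leastAvailableMonotone} it is never disturbed afterwards. This matches the boxes of $T$ unaffected by the insertion.

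Second, the singular entries. By \cref{lem:nonTrivialSlides}, every non-diagonal slide of an entry is caused by a neighbor that reached $\bbb^{\leftarrow\uparrow}$ through purely horizontal or purely vertical slides. So the singular entry $\unpr y$ (and later each bumped letter) travels along a row or a column of the translated copy of $T$. It stops exactly where the least strictly greater entry $\qpr z$ of that row (respectively column) used to sit. I will check, slide by slide, that:
\begin{itemize}
\item the horizontal moves of \eqref{eq:SW} reproduce row insertion of a high letter;
\item the vertical moves reproduce column insertion of a low letter;
\item the displaced entry $\qpr z$ becomes the next singular entry, lying immediately southeast on the same diagonal;
\item the value-changing slides \eqref{eq:not_singular_slides}, \eqref{eq:primedSW}, and the second slide of \eqref{eq:SW} occur exactly in the insertion's two cases for switching to column insertion: either $\qpr z=\pr z$, or $\qpr z$ was bumped off the main diagonal. (The latter is where the slide into a diagonal cell forces the letter high, as in \cref{prop:noPrimesDiagonals}.)
\end{itemize}
The nucleus lemma ensures that after each bumped letter finishes sliding, it has joined the straight-shaped nucleus and stays put. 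Therefore the nucleus after processing row $i$ equals the partial insertion tableau after $i$ bumps, suitably translated.

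The main obstacle is this local case analysis: matching the six slide collections with the mixed insertion cases, especially at the main diagonal and in the priority order among the collections. For instance, one must show that \eqref{eq:singular_slides} is chosen over \eqref{eq:not_singular_slides} precisely when $\qpr x$ is singular, and that ties between equal letters (northmost low, westmost high least) agree with the insertion's choice of the least strictly greater entry. I would handle this by first proving, as the remark preceding the theorem suggests, that the first slide of any singular entry is itself singular. I would then do induction on the position of the current singular entry relative to the diagonal, using \cref{lem:southeast} and \cref{cor:leastAvailableMonotone} to pin down which neighboring values are possible. The final step is to observe that when the chain ends (insertion places a letter at the end of a row or column), rectification terminates with the same straight shape, which is semistandard by \cref{lem:well-defined}.
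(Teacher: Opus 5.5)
Your plan is the paper's proof in outline. Both arguments track the chain of singular entries $\unpr{y}=\qpr{y_0},\qpr{y_1},\dots$ and identify it with the sequence of letters inserted during mixed insertion. Both show that every non-singular entry makes a single diagonal slide back to its cell of $T$. Both maintain that, after the $i$-th singular entry lands, the nucleus is a subtableau of the corresponding partial insertion tableau, with the inductive step split into a low (column insertion) case and a high (row insertion) case.

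However, one item in your slide-to-insertion dictionary is wrong, so the slide-by-slide check you propose would fail as stated. The two switches to column insertion are not both realized by value-changing slides.

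\begin{itemize}
\item When the displaced letter is already low, $\qpr{z}=\pr{z}$, nothing changes value. The just-landed singular letter $\qpr{x}$ and $\pr{z}$ occupy the northwest and southeast corners of a $2\times 2$ square whose other two cells are bullets. The first slide of \eqref{eq:singular_slides} then moves $\pr{z}$ north into the next column. Dually, a high displaced letter starts its row insertion by the second slide of \eqref{eq:singular_slides}, moving west into the next row.
\item In the diagonal case, the mechanism is not ``a slide into a diagonal cell forces the letter high.'' The row-inserted letter that enters the diagonal cell is already high, so any raising there is vacuous. The switch is the second slide of \eqref{eq:SW}: it moves the displaced diagonal letter $\unpr{z}$ north, out of the diagonal, and \emph{lowers} it to $\pr{z}$ in the next column.
\item The genuinely raising slides, \eqref{eq:primedSW} and the first slide of \eqref{eq:not_singular_slides}, do not implement either switch.
\end{itemize}

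Relatedly, singularity does not decide between \eqref{eq:singular_slides} and the last two slides of \eqref{eq:not_singular_slides}. The rule order and the side condition $\qpr{x}\neq\qpr{y}$ do. What you actually need to show is that in $T\oplus\unpr{y}$ this $2\times 2$ configuration arises only with $\qpr{x}$ a just-landed singular letter and $\qpr{y}$ the letter it displaces. Then $\qpr{x}<\qpr{y}$, and the side condition holds automatically.

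With these corrections, your outline goes through along the same lines as the paper's argument.
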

    \begin{proof}

    The mixed insertion of $\unpr{y}$ into $T$ involves a sequence of tableaux $(T_i)_{i=1}^{d}$ such that $T_i$ can be obtained from $T_{i-1}$ by a row or column insertion. For example, $T_1=T$ and $T_2$ is the result of row-inserting $\unpr{y}$ into $T_1$; to be clear, not the complete row-insertion, but its first step. The last element of that sequence, $T_d$, is the $P$-tableau of the mixed insertion. 

    We prove that the sequence $(T_i)_i$ can be read from mixed rectification in the sense that for every intermediate configuration's nucleus $U$, there exists $i$ such that after sliding of a singular entry $U$ is a subtableau of $T_i$; and so that if $i$ is minimal with that property, the change between $T_{i-1}$ and $T_i$ is the singular entry which is now part of the nucleus. Furthermore, noting that the nucleus at the end of mixed rectification contains all of the entries of $T\oplus \unpr{y}$, we observe that if throughout the process the nucleus of $U$ is always a subtableau of $T_i$ then the theorem follows. 

    To establish this fact, we show by induction on the number of singular entries of the rectification that the $i$-th singular entry (the first one is $\unpr{y}$) only slides horizontally (vertically) after the entries in the corresponding row (column) of $T_i$ less than it, have been slid to that row (column); that after it has landed either the insertion process has been completed, i.e., $T_{i+1}=T_d$, or there is an entry diagonal to it (immediately next in the same diagonal); that the entry diagonal to it is both the new singular entry and the entry displaced by the corresponding insertion to $T_i$; and that any other entries which move during the process agree with their positions in $T_i.$

    For the base case, consider a rectification process that has only one singular entry. Since by design we begin with a tableau with holes having a copy of $T$ and a high entry above, and the high entry is singular by definition, it is sufficient to prove that if it does not land in the end of the first row then there is at least one more singular entry. Indeed, if it lands in the end of the first row, then it is greater than all other letters in the first row of $T$ by the mixed rectification process, and so the mixed insertion of $\unpr{y}$ also appends it to the right boundary of the first row leaving everything else unchanged.

    Suppose then by contradiction that at the end of mixed rectification $\unpr{y}$ is not rightmost in the first row. Since all the entries in the copy of $T$ inside $U$ slide diagonally unless $\unpr{y}$ is northwest and immediately diagonal to one of the entries in the first row, we can assume that is the case for one of them, say $\qpr{x}$. But then, as smaller entries are slid before bigger ones per \cref{cor:leastAvailableMonotone}, we have that when it is the turn of $\qpr{x}$ to be slid it is diagonal to an entry that is singular. That is, $\qpr{x}$ is singular. 

    Let $(\qpr{y_i})_i$ be the list of singular entries as they appear in the mixed rectification process, where in particular $\qpr{y_0}=\unpr{y}$. Assume that mixed rectification up to sliding the entry $\qpr{y_{i-1}}$ is in agreement with mixed insertion, meaning that the inductive hypothesis is satisfied. Then the new singular entry $\qpr{y_i}$ was bumped when $\qpr{y_{i-1}}$ was inserted into $T_{i-1}$ during mixed insertion.

    {\sf Case 1 ($\qpr{y_i}$ is a low letter):}
    Let $c$ be the column of $\qpr{y_i}=\unpr{y_i}$ in $U$. Suppose $\qpr{x}$ is an entry that is north of $\unpr{y_i}$ in $T_{i+1}$, i.e., after it has been column inserted during mixed insertion. There is a copy of $\qpr{x}$ in $U$ and it belongs either to the nucleus of $U$, or to the set of entries which have not been slid. 
    
    If the former, then $\qpr{x}$ moved as a singular entry or as a non-singular entry. Non-singular entries are transported diagonally, one unit, to their final position in $U$, whereby they reach their original position in $T$, hence in $T_{i+1}$. Indeed, by the inductive hypothesis, $\qpr{x}$ was not bumped during mixed insertion prior to $\unpr{y_i}$, and as $\qpr{x}$ is north of $\unpr{y_i}$ in $T_{i+1}$, we also have $\qpr{x}< \unpr{y_i}$ and so neither was $\qpr{x}$ bumped by $\unpr{y_i}$'s insertion. Accordingly, $\qpr{x}$ slides into a cell matching its position in $T$ and $T_i$.
    Singular entries are bound by the inductive hypothesis to agree with $T_j$ for some $j\leq i$, hence to agree also with $T_i$, because once an entry has been slid it is not slid again. 
    
    If the latter, then the entries are still in their respective positions as elements of $T$, albeit displaced diagonally one unit south and one unit east. Noting again that they correspond to entries that have not been bumped, they are also in their corresponding positions as elements of $T_i$. That is, in both cases and after $\qpr{y_{i-1}}$ has been slid, $U$ reflects the positions that the entries have in $T_i$. 

    Note that prior to $\unpr{y_i}$ sliding, all entries inside the nucleus of $U$, and in column $c$, are less than $\unpr{y_i}$ because smaller entries are mixed rectified before bigger entries. Moreover, all entries north of $\unpr{y_i}$ in $T_{i+1}$ are north of $\unpr{y_i}$ in $U$ immediately before it is slid. Indeed, we already argued that all the entries in the column $\unpr{y_i}$ is being inserted to in $T_i$ were either in the nucleus of $U$ or in column $c+1$ and in the right order. Then, when $\unpr{y_i}$ completes its traverse northwards it must do so diagonal of the first entry $z$ in column $c$ of $T_i$ which was not less than $\unpr{y_i}$. That is, the entry $\qpr{y_{i+1}}$ which is displaced by the respective column insertion. Or, if there is no such $\qpr{y_{i+1}}$, diagonal to no entry corresponding to the end of the insertion and the mixed rectification. That is, $\unpr{y_i}$ finishes in the same position as the column insertion would have it, and a new singular entry is created if applicable: the letter displaced by the column insertion.
    
    There may be other letters which mix-slide albeit not as singular entries. We claim that they slide diagonally (one unit north, one unit west). First note that at any point of mixed rectification there is at most one singular entry, because we are completely sliding a letter before sliding the next one (\cref{lem:slidingCompletely}), and once a singular entry is part of $U$'s nucleus, there is at most one entry diagonal to it that has not been slid. Crucially, singular entries are the only entries which occupy cells immediately diagonal (one unit north, one unit west) to entries of $U$ that have not moved. Recalling that entries diagonal to singular entries are also singular, this means that all non-singular entries slide diagonally. 
    Thus non-singular entries $\qpr{w}$ with $\qpr{w}\leq \qpr{y_{i+1}}$ slide diagonally, and as they are not entries bumped during the mixed insertion, they rest in cells corresponding to their original position in $T$ and hence also the appropiate position in $T_i$. Then, the resulting nucleus is a subtableau of $T_{i+1}$.

    {\sf Case 2 ($\qpr{y_i}$ is an high letter):}
This case is completely analogous to the case of $\qpr{y_i}$ low, except for the case where it may slide past a diagonal entry. In that case, the diagonal entry becomes the new singular entry.
\end{proof}
    
\begin{example}
        Let us see \cref{thm:mixedRectification} in action. Consider the word $\unpr{7}\unpr{3}\unpr{9}\unpr{4}$. Its mixed insertion equals the mixed rectification process below:  
    \begingroup
    \addtolength{\jot}{1em}
    \begin{align*}
        \begin{ytableau}
            \blank & \blank & \blank & \blank & \blank & \blank & \unpr{4} \\
            \none& \blank & \blank & \blank &\blank & \unpr{9}\\
            \none& \none & \bullet & \bullet & \unpr{3}\\
            \none& \none & \none   & \unpr{7}
        \end{ytableau} &\rightarrow
        \begin{ytableau}
            \blank & \blank & \blank & \blank & \blank & \blank &\unpr{4} \\
            \none& \blank & \blank & \blank &\blank & \unpr{9}\\
            \none& \none & \bullet & \unpr{3} \\
            \none& \none & \none   & \unpr{7}
        \end{ytableau}
        &\rightarrow 
        \begin{ytableau}
            \blank & \blank & \blank & \blank & \blank & \blank &\unpr{4} \\
            \none& \bullet & \bullet & \bullet &\bullet & \unpr{9}\\
            \none& \none & \unpr{3} & \pr{7} \\
        \end{ytableau}\\
        &\rightarrow 
        \begin{ytableau}
            \blank & \blank & \blank & \blank & \blank & \blank &\unpr{4} \\
            \none& \unpr{3} & \bullet & \bullet &\bullet & \unpr{9}\\
            \none& \none & \bullet & \pr{7} \\
        \end{ytableau}
        &\rightarrow 
        \begin{ytableau}
            \blank & \blank & \blank & \blank & \blank & \blank &\unpr{4} \\
            \none& \unpr{3} & \pr{7} & \bullet &\bullet & \unpr{9}\\
        \end{ytableau}\\
        &\rightarrow 
        \begin{ytableau}
            \bullet & \bullet & \bullet & \bullet & \bullet & \bullet &\unpr{4} \\
            \none& \unpr{3} & \pr{7} & \unpr{9}
        \end{ytableau}
        &\rightarrow 
       \begin{ytableau}
            \unpr{3} & \bullet & \bullet & \bullet & \bullet & \bullet &\unpr{4} \\
            \none&  \bullet & \pr{7} & \unpr{9} 
        \end{ytableau}\\
        &\rightarrow 
        \begin{ytableau}
            \unpr{3} & \unpr{4} & \bullet & \bullet   \\
            \none& \bullet & \pr{7} & \unpr{9}  
        \end{ytableau}\\
        &\rightarrow \begin{ytableau}
            \unpr{3} & \unpr{4} & \pr{7} & \bullet  \\
            \none& \bullet & \bullet & \unpr{9}  
        \end{ytableau}
        &\rightarrow 
        \begin{ytableau}
            \unpr{3} & \unpr{4} &  \pr{7} & \none  & \none & \none & \none  \\
            \none& \bullet & \unpr{9}    
        \end{ytableau}\\
        &\rightarrow \begin{ytableau}
            \unpr{3} & \unpr{4} & \pr{7}    \\
            \none& \unpr{9}  
        \end{ytableau}. &\phantom{\rightarrow} \qedhere 
    \end{align*}
    \endgroup 
\end{example}

\section{A solution to Cho's problem}\label{sec:Cho}
 
    Our definition of skew plactic Schur $P$-functions is based on a modification of Sagan and Worley's \cite{Worley, Sagan} rectification algorithm admitting tableaux with low entries on the diagonal. As far as the authors are aware, this variant was first written down explicitly by Cho \cite{Cho}, although it was implicit in earlier works \cite{Worley,Sagan}. Essentially, if no diagonal cell is involved, entries adjacent to a $\bullet$ symbol are compared and the smaller is slid, but when both are equal and low they are now placed in the same column. 
    \begin{equation}
        \begin{ytableau}
        \bullet & \qpr{x} \\
        \qpr{y} & \bullet
    \end{ytableau}
\quad \rightarrow \quad  
    \begin{ytableau}
        \qpr{x} & \bullet \\
        \qpr{y} & \bullet
    \end{ytableau} \quad \text{ for } \qpr{x}<\qpr{y},\text{ and}\hspace{5mm} 
    \begin{ytableau}
        \bullet & \qpr{x} \\
        \qpr{y} & \bullet
    \end{ytableau}
\quad \rightarrow \quad  
    \begin{ytableau}
        \qpr{y} & \qpr{x} \\
        \bullet & \bullet
    \end{ytableau} \quad \text{ for } \qpr{y}<\qpr{x}
    \end{equation}

    \begin{equation}
        \begin{ytableau}
        \bullet & \pr{x} \\
        \pr{x} & \bullet
    \end{ytableau}
\quad \rightarrow \quad  
    \begin{ytableau}
        \pr{x} & \bullet \\
        \pr{x} & \bullet
    \end{ytableau}, \quad \text{ and}\hspace{5mm} 
    \begin{ytableau}
        \bullet & \unpr{x} \\
        \unpr{x} & \bullet
    \end{ytableau}
\quad \rightarrow \quad  
    \begin{ytableau}
        \unpr{x} & \unpr{x} \\
        \bullet & \bullet
    \end{ytableau} 
    \end{equation}
    
    If, on the contrary, the $\bullet$ symbol is in a diagonal cell, we perform a classical jeu de taquin slide unless the entries are equal.
    
    \begin{equation}
        \begin{ytableau}
        \bullet & \qpr{x} \\
        \none & \qpr{y}
    \end{ytableau}
\quad \rightarrow \quad  
    \begin{ytableau}
        \qpr{x} & \bullet \\
        \none & \qpr{y}
    \end{ytableau} \quad \text{ for } \qpr{x}<\qpr{y}
    \end{equation}
    \begin{equation}
        \begin{ytableau}
        \bullet & \pr{x} \\
        \none & \pr{x}
    \end{ytableau}
\quad \rightarrow \quad  
    \begin{ytableau}
        \pr{x} & \unpr{x} \\
        \none & \bullet
    \end{ytableau}, \quad \text{ and}\hspace{5mm} 
    \begin{ytableau}
        \bullet & \pr{x} \\
        \none & \unpr{x}
    \end{ytableau}
\quad \rightarrow \quad  
    \begin{ytableau}
        \unpr{x} & \unpr{x} \\
        \none& \bullet
    \end{ytableau} 
    \end{equation}
Everywhere in the section, ``slide'' stands for a extended Sagan--Worley slide as above. 

We claim that the following resolves Cho's \cref{problem:Cho}.

\begin{definition}\label{def:skewplacticS}
The \newword{skew plactic Schur $P$-function} is
    \[ \sPlacticSchur_{\nu/\mu}\coloneqq  \frac{1}{2^{\diag(\nu/\mu)}} \sum_{T\in \shSSYTprime(\nu/\mu)} [F\circ \rectify(T)]_\sPlactic \in \Z \llbracket \sPlactic \rrbracket,  \]
where $F$ is the map that raises all diagonal positions, $\rectify(\cdot)$ signifies modified Sagan--Worley rectification, and $\diag(\nu/\mu)$ denotes the number of diagonal positions in the skew shape $\nu/\mu.$ (Note that $\diag(\nu / \mu) = \ell(\nu) - \ell(\mu)$.)
\end{definition}

Now we move towards proving that \cref{def:skewplacticS} solves Cho's Open Problem.

    We need some preliminary results clarifying Sagan--Worley's rectification on skew semistandard tableaux which may contain low entries on the diagonal. While it is well known that rectifying shifted standard skew tableaux to a shifted standard target yields $b_{\lambda,\mu}^\nu$ tableaux regardless of the target chosen, the analogous result is not as well explained in the literature for the more general case. Essentially, \cref{thm:generalizedSaganWorley} and \cref{lem:WorleyRectCondition} can be found in \cite{Sagan} and \cite{Worley}, respectively; but the proof in \cite{Worley} is difficult to retrieve and it seems that \cref{lem:WorleyRectCondition} is not used for our same purposes in \cite{Sagan}. These results are also stated in \cite[Proposition~7.3]{Cho} without proof. For the sake of completeness, we include proofs of those results below and use them to show that our definition of shifted plactic skew Schur $P$-functions satisfies Cho's requirements.

\begin{lemma}\label{lem:no_southeast}
    Let $T \in \shSSYTprime(\nu/\lambda)$. Then there is no $\qpr{x}$ SouthEast of another $\qpr{x}$ in $T$. Hence, there is at most one southwestmost $\qpr{x}$ for each $x \in \cN$.
\end{lemma}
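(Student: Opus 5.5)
The plan is to argue by contradiction, chaining the weak increase along rows and columns. Suppose $\qpr{x}$ occupies a box $\bbb=(i,j)$ and another $\qpr{x}$ occupies $\ccc=(i',j')$ with $i'>i$ and $j'>j$; here $\qpr{x}$ denotes the same decorated letter in both boxes. Note that $(i,j')$ lies in $\nu$, since $\nu$ contains $\ccc$ and row $i$ of $\nu$ extends at least as far right as row $i'$. I will pick an intermediate box of $\nu/\lambda$ weakly between $\bbb$ and $\ccc$; the natural candidates are $(i,j')$ and $(i',j)$. I expect $(i,j')$ to be the better choice: it lies weakly east of $\bbb$ in the same row, and weakly north of $\ccc$ in the same column.

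\textbf{Step 1: the corner box lies in the skew shape.} If $(i,j')\in\lambda$, then $(i,j)\in\lambda$ too, because $\lambda$ is a shifted diagram and $(i,j)$ lies west of $(i,j')$ in the same row. This contradicts $\bbb\in\nu/\lambda$, so $(i,j')\in\nu/\lambda$.

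\textbf{Step 2: force the corner entry.} Let $\qpr{z}$ be the entry of $(i,j')$. Row weak increase gives $\qpr{x}\le\qpr{z}$, and column weak increase gives $\qpr{z}\le\qpr{x}$. Hence $\qpr{z}=\qpr{x}$, so $\qpr{x}$ appears twice in row $i$ (at columns $j$ and $j'$) and twice in column $j'$ (at rows $i$ and $i'$). If $\qpr{x}=\pr{x}$ is low, the row condition forbidding two $\pr{x}$ in one row is violated. If $\qpr{x}=\unpr{x}$ is high, the column condition forbidding two $\unpr{x}$ in one column is violated.

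\textbf{Step 3: the main obstacle is diagonal boxes.} The tableau $T$ is only a $Q$-tableau, so diagonal entries may be low. The semistandard conditions hold after raising the diagonal entries, not necessarily for $T$ itself. I would therefore run Step 2 on $F(T)$, the tableau obtained by raising the diagonal boxes, and check which boxes can be diagonal. The box $(i,j)$ is northwest of $\ccc$, and $\ccc$ is in the shifted shape, so $j'\ge i'>i$. Hence $(i,j')$ is off the diagonal and keeps its entry. Also $\ccc$ lies strictly east of $\bbb$ and $\bbb$ satisfies $j\ge i$, so the only boxes that can be diagonal are $\bbb$ (when $i=j$) and $\ccc$ (when $i'=j'$).

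In the case $\qpr{x}=\pr{x}$ low: the comparisons in $T$ between $\bbb$ and the corner, and between the corner and $\ccc$, still force the corner entry to be $\pr{x}$. If $\bbb$ is off the diagonal, row $i$ contains two $\pr{x}$ in $F(T)$, a contradiction. If $\bbb$ is diagonal, it becomes $\unpr{x}$ in $F(T)$. I would then argue in $F(T)$ instead: $\unpr{x}\le$ corner $\le$ entry of $\ccc$. Since the entry of $\ccc$ is $\pr{x}$ or $\unpr{x}$, the corner is also $\unpr{x}$, contradicting the fact that it equals $\pr{x}$ in $T$. A symmetric argument handles a diagonal $\ccc$.

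In the case $\qpr{x}=\unpr{x}$ high: raising does not alter $\unpr{x}$, so Step 2 applies verbatim in $F(T)$.

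\textbf{Step 4: the second sentence.} This follows immediately. If two copies of $x$ (each either $\pr{x}$ or $\unpr{x}$) were both southwestmost, neither would be southwest of the other. One of them is then strictly southeast of the other or weakly northeast of it. I would extend the argument above to the mixed pair $\{\pr{x},\unpr{x}\}$ by the same corner comparison, and then the copies of $x$ form a chain running from southwest to northeast, which has a unique southwestmost element.
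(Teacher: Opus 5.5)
Your proposal is correct and is essentially the paper's argument. The paper only says that the first claim is clear from the increasingness conditions and that the second follows; your comparison through the corner box $(i,j')$, combined with the at-most-one-per-row/column conditions, is exactly what that sentence compresses.

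The one piece left to carry out is the mixed-decoration case you defer in Step~4, which is what the second sentence of the lemma actually needs. It goes through uniformly if you work in $F(T)$ from the start with underlying values, and this also makes your diagonal case split in Step~3 unnecessary. There $F(T)(\bbb)\le z\le F(T)(\ccc)$ forces $z\in\{\pr{x},\unpr{x}\}$. If $z=\pr{x}$, row $i$ contains two copies of $\pr{x}$; if $z=\unpr{x}$, column $j'$ contains two copies of $\unpr{x}$.
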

\begin{proof}
    The first claim is clear by the increasingness conditions. The second claim follows from the first.
\end{proof}

\begin{lemma}\label{lem:WorleyRectCondition}
    Let $x\in \cN$ be fixed. For an arbitrary skew tableau $S \in \shSSYTprime(\nu / \lambda)$ and $S'$ the result of any sequence of extended Sagan--Worley slides on $S$, the southwestmost $\qpr{x}$ in $S$ is $\unpr{x}$ if and only if the southwestmost $\qpr{x}$ in $S'$ is $\unpr{x}$. 
\end{lemma}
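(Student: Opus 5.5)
It suffices to treat a single extended Sagan--Worley slide, since the statement for any sequence of slides then follows by induction on the number of slides. Moreover, slides are reversible. Each forward slide has an inverse reverse slide, which is again an extended Sagan--Worley move, and the statement is symmetric in $S$ and $S'$. So I only need to prove one direction for one slide: if the southwestmost $\qpr{x}$ of $S$ is $\unpr{x}$, then so is that of $S'$, together with the analogous statement for $\pr{x}$. By \cref{lem:no_southeast}, the entries with underlying value $x$ form a chain running from southwest to northeast. The lows of this chain lie in distinct rows and the highs in distinct columns, and "the" southwestmost entry is well defined.

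Fix a slide moving a bullet through the tableau. First, if no entry of underlying value $x$ changes position or value, there is nothing to prove. Second, if an entry of value $x$ moves but is not the southwestmost one, then it cannot overtake the southwestmost one. Slides move an entry one step north or west, and if it passed weakly southwest of the current extremal $\qpr{x}$, the two would violate \cref{lem:no_southeast} in $S'$ (the intermediate tableaux stay in $\shSSYTprime$, which I take as known for Sagan--Worley slides). So the southwestmost entry keeps its identity. The remaining case is that the moved entry is the southwestmost $\qpr{x}$ itself, or that two equal entries interact.

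I would then check the local moves one by one. An ordinary horizontal or vertical move of $\qpr{y}$ with $\qpr{x}<\qpr{y}$ (or the reverse) changes neither values nor the southwestmost relation. In the move $\bullet,\pr{x}$ over $\pr{x}$, the two lows end stacked in a column, and the lower one stays southwestmost and low. In the high version, the two highs end up in a row, the western one is southwestmost, and it is high as before. For the diagonal moves, the configuration $\bullet,\pr{x}$ above $\pr{x}$ on the diagonal becomes $\pr{x},\unpr{x}$. Before the move, the southwestmost of the pair is the diagonal $\pr{x}$; afterwards it is the new $\pr{x}$ in the diagonal cell, still low. The move $\bullet,\pr{x}$ above $\unpr{x}$ becomes $\unpr{x},\unpr{x}$. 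Here the southwestmost is $\unpr{x}$ both before and after. The point is that the southwestmost entry of the pair was the diagonal $\unpr{x}$, since it lies south, and the two lie on an antidiagonal so that south dominates.

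The main obstacle is to make "southwestmost" precise when the two relevant entries are incomparable in the naive sense, i.e.\ one is north-west and one is south-east. \Cref{lem:no_southeast} rules that out, but I must verify it also holds for the intermediate tableaux. I would therefore first prove that each extended slide preserves membership in $\shSSYTprime$, after which the case analysis above is finite and mechanical.
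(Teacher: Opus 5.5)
\textbf{There is a genuine gap.} Your overall plan is the same as the paper's: reduce to a single slide, then do a local case analysis using \cref{lem:no_southeast}. But your case analysis skips the one case that carries real content.

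You assert that an ordinary horizontal or vertical move ``changes neither values nor the southwestmost relation.'' This is false when the moving entry is the southwestmost $\qpr{x}\in\bbb$ itself and it moves \emph{north}. It then leaves its row, and any other $x$-entry still in that row (necessarily to its east) becomes the new southwestmost entry.
\begin{itemize}
\item If the mover is $\unpr{x}$, this is harmless, because every entry of value $x$ to its east in that row is also $\unpr{x}$.
\item If the mover is $\pr{x}$ and a $\unpr{x}$ remains in its row, the southwestmost $\qpr{x}$ flips from low to high, and the low-to-low direction of the lemma fails.
\end{itemize}
Ruling out the second situation is exactly the second half of the paper's proof. Such a $\unpr{x}$ must sit in $\bbb^{\rightarrow}$: nothing lies strictly between $\pr{x}$ and $\unpr{x}$, and a row contains only one $\pr{x}$. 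The bullet is in $\bbb^{\uparrow}$. For $\pr{x}$ to move up, the entry $\qpr{v}\in\bbb^{\uparrow\rightarrow}$ must satisfy $\qpr{v}>\pr{x}$; if $\qpr{v}\le\pr{x}$, then $\qpr{v}$ slides left instead, using the equal-lows rule when $\qpr{v}=\pr{x}$. On the other hand, the column gives $\qpr{v}\le\unpr{x}$, and $\qpr{v}\neq\unpr{x}$ because a column holds at most one $\unpr{x}$. So $\pr{x}<\qpr{v}<\unpr{x}$, which is impossible.

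The same omission affects your treatment of the diagonal move with $\bullet,\pr{x}$ above a diagonal $\pr{x}$. You only compare the two entries of the pair. However, the lower $\pr{x}$ vacates its row, so you must also exclude a $\unpr{x}$ immediately to its right. This again follows because the cell above that $\unpr{x}$ would have to lie strictly between $\pr{x}$ and $\unpr{x}$.

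So the real obstacle is not making ``southwestmost'' well defined; that is immediate from \cref{lem:no_southeast}. It is showing that a southwestmost $\pr{x}$ can never be lifted out of its row while a $\unpr{x}$ stays behind. Without this local comparison, your proof of the low-preserving implication is incomplete.

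Separately, your appeal to reversibility of slides is unjustified: the paper defines only forward extended slides, and the special diagonal rules are not obviously invertible. It is also unnecessary. You already plan to prove both the high-preserving and the low-preserving implications, and these give the equivalence because slides preserve the underlying content.
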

\begin{proof}
By induction, it suffices to assume that $S'$ is obtained from a single slide on $S$.

    Let $x\in \cN$ be such that the southwestmost $\qpr{x}$ in $S$ is $\unpr{x} \in \bbb$. Suppose towards a contradiction that the southwestmost $\qpr{x}$ in $S'$ is $\pr{x}$. By definition, Sagan--Worley slides can change an entry $\pr{x}$ to $\unpr{x}$ but never the opposite; so the slide must displace an $\pr{x}$ westwards without also displacing the southwestmost $\unpr{x}$ westwards. 

  Since a slide can move an entry at most one column left, this $\pr{x}$ must lie in the column of $\bbb$ in $S$. 
    If this $\pr{x} \notin \bbb^\uparrow$ in $S$, then in $S'$ it is NorthWest of the $\unpr{x} \in \bbb$, contradicting \cref{lem:no_southeast}. 
    Hence, $\pr{x} \in \bbb^\uparrow$ in $S$. Therefore the slide displacing this $\pr{x}$ is a diagonal slide, in which the $\pr{x}$ is raised and moves to $\bbb^{\uparrow \leftarrow}$ and $\unpr{x}$ moves up to $\bbb^\uparrow$. But then clearly the southwestmost $\qpr{x}$ is still $\unpr{x}$, a contradiction. 
Thus, if the southwestmost entry $\qpr{x}$ is $\unpr{x}$, after one slide we may have a different southwestmost $\qpr{x}$, but it is still $\unpr{x}$. 

    Now consider $\qpr{x}$ with the southwestmost $\qpr{x}$ being $\pr{x} \in \bbb$. No diagonal slides turning $\pr{x}$ into $\unpr{x}$ are available since there are no instances of $\unpr{x}$ southwest of $\bbb$. Hence, it is enough to consider slides displacing this $\pr{x}$ North and leaving an instance of $\unpr{x}$ South of it. In $S$, this $\unpr{x}$ must be in the row of $\bbb$. Moreover, note that if there is a $\unpr{x}$ in the same row as $\pr{x} \in \bbb$, then there is $\unpr{x} \in \bbb^\rightarrow$ because there is no value $\qpr{e}$ with $\pr{x}<\qpr{e}<\unpr{x}$. Accordingly, in $S$ we have the local configuration
    \ytableausetup{boxsize=normal}
    \[\begin{ytableau}
        \cdots & \cdots & \bullet & \qpr{v} \\
        \none & \cdots & \pr{x}              & \unpr{x}
    \end{ytableau}\]
    and for $\pr{x}$ to move North we must have $\qpr{v}>\pr{x}$. However, $\qpr{v}<\unpr{x}$ by increasingness, so that $\pr{x}<\qpr{v}<\unpr{x}$, an impossibility. Thus the southwestmost $\qpr{x}$ is still $\pr{x}$. 
\end{proof}

\begin{corollary}\label{cor:noprimetarget}
    Let $S \in \shSSYTprime(\nu / \mu)$. Then $\rectify(S) \in \shSSYT(\lambda)$ for some $\lambda$ if and only if $S$ has an $\unpr{x}$ southwest of every $\pr{x}$.
\end{corollary}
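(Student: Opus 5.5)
The plan is to reduce the statement to \cref{lem:WorleyRectCondition} by interpreting both sides at the level of the rectified tableau $R \coloneqq \rectify(S)$, which has straight shape $\lambda$. Throughout, read ``$S$ has an $\unpr{x}$ southwest of every $\pr{x}$'' as ``for every $x \in \cN$ occurring in $S$, the southwestmost $\qpr{x}$ of $S$ is $\unpr{x}$''. The two readings agree: by \cref{lem:no_southeast} the occurrences of a fixed $x$ form a chain running from southwest to northeast, so the southwestmost $\qpr{x}$ lies weakly southwest of every other $\qpr{x}$. Hence every $\pr{x}$ has an $\unpr{x}$ southwest of it exactly when the unique southwestmost occurrence is high.

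The first step is a property of straight-shape $Q$-tableaux: $R \in \shSSYTprime(\lambda)$ lies in $\shSSYT(\lambda)$ if and only if, for each $x$, the southwestmost $\qpr{x}$ of $R$ is $\unpr{x}$. To prove it, note first that in a straight shifted $Q$-tableau the southwestmost $\qpr{x}$ occupies a diagonal box. The entries $\geq \pr{x}$ form a skew shape $\lambda/\alpha$, and the occurrences of $x$ form the border strip at the inner edge of it. The southwestmost box of that strip is the first box of some row $r$ of $\lambda/\alpha$. If $r$ is not a diagonal box, then the box directly below it is either outside $\lambda$, or else contains an entry $< \pr{x}$ sitting below an entry $\geq \pr{x}$, which is a contradiction.

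It remains to check that entries below it in the same column cannot be $x$ either; this follows from \cref{lem:no_southeast} together with the column condition. Given this, the equivalence is direct. If every southwestmost $\qpr{x}$ is high, then every diagonal box holds a southwestmost entry, because the diagonal entries of a straight shifted tableau have distinct values, so each is the southwestmost occurrence of its value. Hence no diagonal box is low, and $R \in \shSSYT(\lambda)$. Conversely, if $R$ is semistandard, the southwestmost $\qpr{x}$ is diagonal and therefore high.

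The second step combines this with \cref{lem:WorleyRectCondition}, applied with $S' = R$ for each $x$ separately. The southwestmost $\qpr{x}$ of $S$ is $\unpr{x}$ if and only if the southwestmost $\qpr{x}$ of $R$ is $\unpr{x}$. Taking all $x$ simultaneously and applying the first step gives the corollary. One must also note that the set of values occurring is preserved by slides, which is clear.

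The main obstacle is the first step: showing carefully that in straight shape the southwestmost occurrence of $x$ is exactly a diagonal box. The delicate cases are low entries stacked in a column and high entries in a row. I would handle this by looking at the smallest row index containing an $x$ in column $c$ and using row and column weak increase to push it down to the diagonal.
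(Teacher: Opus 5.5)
Your reduction of the hypothesis to ``the southwestmost $\qpr{x}$ of $S$ is $\unpr{x}$ for every $x$'' is fine. The bridging claim of your first step, however, is false: in a straight shifted $Q$-tableau the southwestmost $\qpr{x}$ need not lie on the diagonal. Take $R$ of shape $(2)$ with entries $\unpr{1},\pr{2}$. Then $R\in\shSSYT((2))$, yet the only (hence southwestmost) occurrence of $2$ is the low entry $\pr{2}$ in box $(1,2)$.

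Your argument breaks exactly where you pass over it quickly. The box below a non-diagonal southwestmost occurrence may lie outside $\lambda$, or may contain an entry of value larger than $x$, and neither is a contradiction. So the implication ``$R\in\shSSYT(\lambda)$ $\Rightarrow$ every southwestmost $\qpr{x}$ of $R$ is $\unpr{x}$'' fails, and with it your proof of the ``only if'' direction.

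No repair is possible, because that direction of the statement is itself false. Let $S$ have shape $(3)/(1)$ with $\unpr{1},\pr{2}$ in boxes $(1,2),(1,3)$. $S$ is even semistandard and contains a $\pr{2}$ with no $\unpr{2}$ at all. Yet sliding into the inner corner $(1,1)$ gives $\rectify(S)=R\in\shSSYT((2))$. The paper's one-line proof (``immediate from \cref{lem:WorleyRectCondition}'') silently uses the same identification, so the defect lies in the statement, not only in your write-up.

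What your approach does prove is the ``if'' direction, via the one-sided fact that is actually true: every diagonal entry of $R$ is the southwestmost occurrence of its value. The only box of a shifted diagram weakly southwest of a diagonal box $(i,i)$ is $(i,i)$ itself. By your chain observation (from \cref{lem:no_southeast}), the southwestmost occurrence of $x$ lies weakly southwest of every occurrence, so a diagonal occurrence must be it. You do not need distinctness of diagonal values for this.

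Hence if every southwestmost $\qpr{x}$ in $S$ is $\unpr{x}$, then \cref{lem:WorleyRectCondition} transfers this to $R$. All diagonal entries of $R$ are then high, and $R\in\shSSYT(\lambda)$. In the other direction one only gets the following: if $\rectify(S)$ is semistandard, then the southwestmost $\qpr{x}$ of $S$ is $\unpr{x}$ for those $x$ whose southwestmost occurrence in $\rectify(S)$ is diagonal. That is not a condition on $S$ alone.
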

\begin{proof}
    This is immediate from \cref{lem:WorleyRectCondition}.
\end{proof}

\begin{theorem}\label{thm:generalizedSaganWorley}
        Let $T \in \shSSYTprime(\lambda)$. Then the number of tableaux $S \in \shSSYTprime(\nu/\mu)$ with $\rectify(S)=T$ is $b_{\lambda,\mu}^\nu.$
\end{theorem}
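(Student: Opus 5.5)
Throughout, $b_{\lambda,\mu}^\nu$ is the structure constant in $P_\mu P_\lambda=\sum_\nu b_{\lambda,\mu}^\nu P_\nu$, equivalently the number of standard shifted skew tableaux of shape $\nu/\mu$ that Sagan--Worley rectify to a fixed standard tableau of shape $\lambda$. The plan is to reduce the statement to this standard case by standardization, which has to be compatible with extended Sagan--Worley slides. The key steps, in order, are as follows.

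\textbf{Step 1: standardization commutes with slides.} For $S\in\shSSYTprime(\nu/\mu)$, show that $\stand(\rectify(S))=\rectify(\stand(S))$. Check this locally for each extended slide. The tie-breaking rule in the definition of $\stand$ orders equal $\pr{x}$'s north to south and equal $\unpr{x}$'s west to east. Under this rule, the equal-entry rules (same-column for low ties, same-row for high ties, and the diagonal rules that convert $\pr{x}\pr{x}$ or $\pr{x}\unpr{x}$) are exactly the classical standard comparison of the two standardized values. For the diagonal cases I must verify that the entry sliding into the diagonal cell is the smaller standardized one. I must also check that the raising of the remaining $\pr{x}$ to $\unpr{x}$ does not change its relative order, using \cref{lem:no_southeast}.

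\textbf{Step 2: fibers of standardization.} Define a \emph{destandardization} given a weight. Fix $T\in\shSSYTprime(\lambda)$, let $c=c(T)$, and let $\sigma$ record which letters are low. A tableau $S$ with $\rectify(S)=T$ has content $c$. By \cref{lem:WorleyRectCondition} applied to every $x$, the low/high status of the southwestmost $\qpr{x}$ agrees between $S$ and $T$. The remaining low/high decorations of $S$ are forced by the positions of the standardized entries. Indeed, within a block of equal underlying value $x$, the low entries form a vertical-ish strip read north to south and the high entries form a horizontal strip read west to east. Apart from the southwestmost corner, where the strips meet, the decoration of each cell is determined by whether its neighbor in the block lies below or to the right. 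Combining this with Step 1, $S\mapsto\stand(S)$ is a bijection from $\{S:\rectify(S)=T\}$ onto $\{S_0 \text{ standard of shape }\nu/\mu:\rectify(S_0)=\stand(T)\}$. The inverse relabels $S_0$ using the content $c$ and the southwestmost markings read off from $T$. It remains to check that this relabelling yields a valid $Q$-tableau, i.e. that the block structure in $S_0$ is a legal strip. This follows by running Step 1 backwards: slides preserve "being a destandardizable block."

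\textbf{Step 3: count.} The standard count is independent of the standard target. I would cite this from Worley/Sagan, or note it is the shifted analogue of the jeu de taquin invariance, via dual equivalence (Haiman). This gives $b_{\lambda,\mu}^\nu$.

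The main obstacle is Step 2's surjectivity, that every standard $S_0$ rectifying to $\stand(T)$ destandardizes to a genuine $Q$-tableau. My first attempt would be to reverse the slides from $\stand(T)$, whose blocks are legal strips, using reverse slides and the same local verification as in Step 1. This is the place where the diagonal conversion rules require care.
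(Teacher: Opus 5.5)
Your proposal is correct and is essentially the paper's argument. Both rest on standardization commuting with the extended slides, on \cref{lem:WorleyRectCondition} fixing the marking of each southwestmost $\qpr{x}$, and on a bijection with standard tableaux rectifying to $\stand(T)$, which are counted by $b_{\lambda,\mu}^\nu$. The only difference is organizational: you destandardize directly using the southwestmost markings read off from $T$, whereas the paper first treats targets whose southwestmost entries are all high and then reduces the general case to that one via the map $G$ that raises southwestmost entries.

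One small fix in Step 2. Read the $x$-block from southwest to northeast. When two consecutive cells are not adjacent, geometry does not determine the marking of the later cell. Instead, each cell after the first is low exactly when its standardized label is smaller than all labels preceding it, and high exactly when it is larger than all of them. This criterion is what actually makes your injectivity claim hold.
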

\begin{proof}
    We establish the result in $x$ special cases and then see that the theorem follows.
    \medskip

    {\sf Case 1 ($T\in \shSSYT(\lambda)$):}
    By \cref{cor:noprimetarget}, the only $S \in  \shSSYTprime(\nu/\mu)$ with $\rectify(S)=T$ are in fact in $\shSSYT(\nu/\mu)$. This case of the theorem then follows from the ordinary version of Sagan--Worley (see Equation $7.6$ in \cite{Worley}.)

\medskip
    {\sf Case 2 ($T \in \shSSYTprime(\lambda)$ and every $\pr{x}$ in $T$ is northeast of some $\unpr{x}$):}

    Let $S \in \shSSYTprime(\nu/\mu)$ with $\rectify(S)=T$. By \cref{lem:WorleyRectCondition}, $S$ has an $\unpr{x}$ southwest of every $\pr{x}$. In particular, for every $x \in \cN$, the southwestmost $\qpr{x}$ in $S$ is $\unpr{x}$. We claim that there exists a bijection between such skew tableaux $S$ of shape $\nu/\mu$ with $\rectify(S)=T$ and standard tableaux $U$ of the same shape with $\rectify(U) = \stand(T)$. 
    
    To wit, compute the standardization of $S$.  This defines an injective map. To show that it is surjective, consider a standard skew tableau $W$. Let $a_i$ be the number of labels in $T$ strictly less than $\pr{i}$. Elements of $W$ in the interval $[a_i+1,a_i+\ct(T)_i]\cap \unpr{\cN}$ form a skew shape $V_i$ inside $W$ and can be partitioned in exactly two ways as a vertical strip whose entries increase from north to south and a horizontal strip whose entries increase from west to east. Choose among those two the partition of $V_i$ whose vertical strip is smallest. For all $i$, change the elements in the vertical strip thus defined of $V_i$ for $\pr{i}$ and in the horizontal strip of $V_i$ for $\unpr{\imath}$. The result is a semistandard skew tableau $W'$ with at least one $\unpr{x}\in \unpr{\cN}$ southwest of each $\pr{x} \in \pr{\cN}$. 

    Observe that performing a slide and standardizing such tableaux $S$ commute as operations on $S$. Accordingly, $\rectify(S)=T$ if and only if $\rectify(\stand(S))=\stand(T).$ But the number of standard skew tableaux of skew shape $\nu/\mu$ rectifying to $\stand(T)$ is $b_{\lambda,\mu}^\nu$. Therefore, by the correspondence between skew standard tableaux of shape $\nu/\mu$, and skew semistandard tableaux of shape $\nu/\mu$ of content $\ct(T)$ with at least a $x$ southwest of $x'$ for every $x'$, we have that the number of the latter rectifying to $T$ is also $b_{\lambda,\mu}^\nu.$ 

\medskip
    {\sf Case 3 ($T\in\shSSYTprime(\lambda)$ arbitrary):}
    
    Suppose now that there are no restrictions on $T$, and let $S$ be of shape $\nu/\mu$ such that $\rectify(S)=T$. By \cref{lem:WorleyRectCondition}, the set $C$ of values $x \in \cN$ such that the southwestmost $\qpr{x}$ in $S$ is $\pr{x}$, is equal to the set of such $x$ where the southwestmost $\qpr{x}$ in $T$ is $\pr{x}$. Accordingly, the map $G$ raising each southwestmost $\qpr{x}$ in $S$ to $\unpr{x}$, furnishes us with a bijection between skew tableaux of shape $\nu/\mu$ rectifying to $T$, and skew tableaux of shape $\nu/\mu$ with all southwestmost  $\qpr{x}$ being $\unpr{x}$ and rectifying to $G(T).$ But the latter were counted; there are $b_{\lambda,\mu}^\nu$ of them. Hence, for general $T$ the theorem follows.
\end{proof}

Note that in our \cref{def:skewplacticS}, Sagan--Worley's algorithm is employed to rectify $T$, but after it has been rectified its shifted plactic class (codifying equivalence under Haiman's insertion) is taken. The following shows that the skew plactic Schur $P$-functions, as defined in \cref{def:skewplacticS}, live in the correct ring and have the desired expansion into Serrano's plactic Schur $P$-functions.

The following shows that our definition solves Cho's Open Problem:
\begin{thm}
    Let $\mu<\nu$ be strict partitions. Then $\sPlacticSchur_{\nu/\mu} \in \mathbb{Q}[\sPlacticSchur_\lambda]_\lambda$ and
    \[\sPlacticSchur_{\nu/\mu}  =  \sum_{\lambda} \frac{2^{\ell(\lambda)}}{2^{\diag(\nu/\mu)}} b_{\lambda,\mu}^\nu  \sPlacticSchur_\lambda.   \]
    In particular, the expansion coefficients of $\sPlacticSchur_{\nu/\mu}$ in the plactic Schur $P$-basis are equal to the expansion coefficients of the ordinary skew Schur $P$-function $P_{\nu/\mu}$ in the ordinary Schur $P$-basis. 
\end{thm}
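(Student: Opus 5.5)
The plan is to reorganize the sum defining $\sPlacticSchur_{\nu/\mu}$ according to the rectification $\rectify(T)$, count fibers with \cref{thm:generalizedSaganWorley}, and then recognize the resulting sums over straight-shape $Q$-tableaux as multiples of $\sPlacticSchur_\lambda$.

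First, since extended Sagan--Worley rectification of any $T\in\shSSYTprime(\nu/\mu)$ produces a $Q$-tableau of some straight shape $\lambda$ with $|\lambda|=|\nu|-|\mu|$, I would write
\[
\sum_{T\in \shSSYTprime(\nu/\mu)} [F\circ\rectify(T)]_\sPlactic = \sum_{\lambda}\sum_{R\in\shSSYTprime(\lambda)} \#\{T\in\shSSYTprime(\nu/\mu) : \rectify(T)=R\}\,[F(R)]_\sPlactic .
\]
\cref{thm:generalizedSaganWorley} says each fiber has exactly $b_{\lambda,\mu}^\nu$ elements, independent of $R$. The inner sum therefore becomes $b_{\lambda,\mu}^\nu\sum_{R\in\shSSYTprime(\lambda)}[F(R)]_\sPlactic$.

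Next I would handle the map $F\colon \shSSYTprime(\lambda)\to\shSSYT(\lambda)$, which raises the diagonal. Each diagonal entry of a straight-shape tableau can independently be low or high: lowering a diagonal $\unpr{x}$ to $\pr{x}$ keeps rows and columns weakly increasing, because nothing lies to its left in its row. For the conditions on repeated letters, the only concern is a second $\pr{x}$ in the same row, east of the diagonal box. That cannot happen, because the entry just below that $\pr{x}$ would have to be at least $\pr{x}$, and a low entry cannot repeat in its column. Hence $F$ is exactly $2^{\ell(\lambda)}$-to-one, and
\[
\sum_{R\in\shSSYTprime(\lambda)}[F(R)]_\sPlactic = 2^{\ell(\lambda)}\sum_{U\in\shSSYT(\lambda)}[U]_\sPlactic .
\]

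The remaining and main step is to identify $\sum_{U\in\shSSYT(\lambda)}[U]_\sPlactic$ with $\sPlacticSchur_\lambda$. This is really a statement about Serrano's definition: each shifted plactic class of shape $\lambda$ corresponds to a unique semistandard tableau of shape $\lambda$ with no low diagonal entries, and it should appear exactly once in $\sFreeSchur_\lambda$. I would argue this through the mixed reading word. First, the words in $\hookword(\lambda)$ are precisely the mixed reading words of the tableaux in $\shSSYT(\lambda)$; this is the equivalence with Serrano's original definition, asserted after the definition of $\sFreeSchur_\lambda$. Second, the mixed insertion tableau of such a word recovers the tableau, which is Serrano's result that the mixed reading word is shifted plactic equivalent to its tableau. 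Since shifted plactic classes are identified with semistandard tableaux, the projection of $\sFreeSchur_\lambda$ is $\sum_{U}[U]_\sPlactic$. I expect this bookkeeping, namely the bijection between hook words and tableaux, to be the main obstacle; I would cite \cite{Serrano,EstupinanSalamanca.Pechenik} rather than reprove it.

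Combining the three displays and dividing by $2^{\diag(\nu/\mu)}$ gives the stated expansion, so $\sPlacticSchur_{\nu/\mu}$ lies in $\mathbb{Q}[\sPlacticSchur_\lambda]_\lambda$. For the final sentence of the theorem, I would project to commuting variables, where $\sPlacticSchur_\lambda\mapsto P_\lambda$. I would then compare with the classical identity $P_{\nu/\mu}=\sum_\lambda 2^{\ell(\lambda)-\ell(\nu)+\ell(\mu)}b^\nu_{\lambda,\mu}P_\lambda$, which is equivalent to $Q_{\nu/\mu}=\sum_\lambda b^\nu_{\lambda,\mu}Q_\lambda$ via $Q=2^{\ell}P$, and use $\diag(\nu/\mu)=\ell(\nu)-\ell(\mu)$. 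The coefficients then agree.
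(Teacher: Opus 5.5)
Your proposal follows the paper's proof: you group the sum by $\rectify(T)$, use \cref{thm:generalizedSaganWorley} to get fibers of size $b_{\lambda,\mu}^\nu$, and observe that $F$ is $2^{\ell(\lambda)}$-to-one on $\shSSYTprime(\lambda)$. You also make explicit two things the paper leaves implicit, namely the identification $\sPlacticSchur_\lambda=\sum_{U\in\shSSYT(\lambda)}[U]_\sPlactic$ and the projection to commuting variables.

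One small correction concerns your justification that lowering a diagonal $\unpr{x}$ gives no second $\pr{x}$ in its row. As stated it is wrong: in \cref{def:semistandard} it is high entries that cannot repeat in a column, while low entries may. The claim itself is immediate, because every entry east of that diagonal box is at least $\unpr{x}>\pr{x}$. Your check of weak increase also omits the entry above the diagonal box. That case is fine too: the entry is strictly less than $\unpr{x}$, since no column holds two copies of $\unpr{x}$, hence it is at most $\pr{x}$.
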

\begin{proof}
   By \cref{def:skewplacticS}, we need to prove that for any semistandard tableau $S \in \shSSYTprime(\lambda)$, the number of skew tableaux $T \in \shSSYTprime(\nu/\mu)$ with $[F\circ \rectify(T)]_\sPlactic = [F(S)]_\sPlactic$ is equal to $2^{\ell(\lambda)}b_{\lambda,\mu}^\nu$. Noting that $b_{\lambda,\mu}^\nu$ is the number of skew tableaux $T \in \shSSYTprime(\nu/\mu)$ rectifying to any semistandard tableau $U \in \shSSYTprime(\lambda)$ by \cref{thm:generalizedSaganWorley}, it suffices to establish that there are $2^{\ell(\lambda)}$ semistandard tableaux $\hat{S} \in \shSSYTprime(\lambda)$ with $F(\hat{S})=F(S).$ 

    The equality $F(\hat{S})=F(S)$ holds if and only if $\hat{S}$ can be obtained from $F(S)$ by choosing replacing some diagonal entries $\unpr{x}$ with $\pr{x}$. This can be done in $2^{\ell(\lambda)}$ different ways.  
\end{proof}
 
\section*{Acknowledgements}
The authors are grateful for useful conversations with Vic Reiner and Luis Serrano.

Both authors were partially supported by a Discovery Grant (RGPIN-2021-02391) and Launch Supplement (DGECR-2021-00010) from
the Natural Sciences and Engineering Research Council of Canada. SES also acknowledges partial support from a Sinclair Graduate Scholarship from the University of Waterloo.

\bibliographystyle{amsalpha} 
\bibliography{shifted.bib}
\end{document}